\title[Relatively extra-large Artin groups]{The $K(\pi,1)$ conjecture and acylindrical hyperbolicity for relatively extra-large Artin groups}
\author{Katherine Goldman}
\address{231 West 18th Avenue, Columbus OH 43210}
\email{goldman.224@osu.edu}
\urladdr{goldmanmath.com}
\newtheorem*{thm*}{Theorem}
\newtheorem{thm}{Theorem}[section]
\newtheorem{prop}[thm]{Proposition}
\newtheorem{lemma}[thm]{Lemma}
\theoremstyle{definition} 
\newtheorem{defn}[thm]{Definition}
\newtheorem{rem}[thm]{Remark}
\begin{document}

\begin{abstract}
    Let $A_\Gamma$ be an Artin group with defining graph $\Gamma$. We introduce the notion of $A_\Gamma$ being extra-large relative to a family of arbitrary parabolic subgroups. This generalizes a related notion of $A_\Gamma$ being extra-large relative to two parabolic subgroups, one of which is always large type. Under this new condition, we show that $A_\Gamma$ satisfies the $K(\pi,1)$ conjecture whenever each of the distinguished subgroups do. In addition, we show that $A_\Gamma$ is acylindrically hyperbolic under only mild conditions. 
\end{abstract}

\maketitle

Let $\Gamma$ be a finite simplicial graph whose edges are labeled with (finite) integers, each at least $2$. For vertices $s,t$ of $\Gamma$ connected by an edge, let $m(s,t)$ denote the label of the edge between $s$ and $t$. Let $S = \mathrm{Vert}(\Gamma)$. Since $\Gamma$ is simplicial, we use the convention that an edge of $\Gamma$ is the same as an unordered pair $\{s,t\}$ of vertices of $\Gamma$. The Artin group defined by $\Gamma$ is
\[
    A_\Gamma = \langle\, S \mid \mathrm{prod}(s,t; m(s,t)) = \mathrm{prod}(t,s; m(s,t)) \text{ for } \{s,t\} \text{ an edge of } \Gamma \,\rangle,
\]
where $\mathrm{prod}(a,b;n)$ is the alternating word in $a$ and $b$ starting with $a$ of length $n$ (eg, $aba \dots$). We call the pair $(A,S)$ an \emph{Artin-Tits system}.

There is a Coxeter group also naturally associated with this defining graph; namely,
\[
    W_\Gamma = \langle\, S \mid (st)^{m(s,t)} = 1 \text{ for } \{s,t\} \text{ an edge of } \Gamma, \ s^2 = 1 \text{ for } s \in S \,\rangle.
\]
It is well known that there is a natural surjective homomorphism $A_\Gamma \to W_\Gamma$ induced by the identity map on $S$. Recall that if $W_\Gamma$ is finite, then we call $W_\Gamma$ \emph{spherical} and call $A_\Gamma$ \emph{spherical-type}. In this case, we may sometimes refer to $\Gamma$ itself as \emph{spherical-type}. 

By van der Lek \cite{vdl1983}, if $\Gamma'$ is a full (or ``induced'') subgraph of $\Gamma$, then the natural map from the Artin group $A_{\Gamma'}$ to $A_\Gamma$ is an injection. (Recall that a subgraph $\Gamma'$ of $\Gamma$ is called full if for any pair of vertices $v,w$ of $\Gamma'$ which span an edge $\{v,w\}$ in $\Gamma$, we also have that $\{v,w\}$ is an edge of $\Gamma'$.)  
We call such a subgroup of $A_\Gamma$ a \emph{(standard) parabolic subgroup}. Sometimes, if $T = \mathrm{Vert}(\Gamma')$, we write $A_T$ for $A_{\Gamma'}$.

It is also well known that the Artin group $A_\Gamma$ is the fundamental group of a space $N(W)$ which is the quotient of a complement of a certain complexified hyperplane arrangement by a natural $W_\Gamma$-action. (See \cite{paris2014k} for more details.)
The long-standing $K(\pi,1)$ conjecture states that $N(W)$ is aspherical (ie, has contractible universal cover).
Currently, the $K(\pi,1)$ conjecture is known to be true when
\begin{enumerate}
    \item $A_\Gamma$ is spherical-type (in \cite{deligne1972immeubles}),
    \item $A_\Gamma$ is affine-type (meaning $W_\Gamma$ has a finite-index subgroup which acts properly by isometries on a Euclidean space) (proven in general in \cite{paolini2021proof}), 
    \item if $\Gamma'$ is a full spherical-type subgraph of $\Gamma$ then $|\mathrm{Vert}(\Gamma')| = 2$ (in which case $A_\Gamma$ is called 2-dimensional) (in \cite{charney1995k}), or more generally, if $A_\Gamma$ is locally reducible (in \cite{Charney2000TheTC}),
    \item if every full complete subgraph of $\Gamma$ is spherical-type (in which case $A_\Gamma$ is called FC type) 
    (also in \cite{charney1995k}), and
    \item some combination criteria are satisfied, including results by Godelle and Paris \cite{godelle2012k} and Ellis and Sk\"oldberg \cite{ellis2010}.
\end{enumerate}

We present a new criterion based on the following familiar condition: an Artin group $A_\Gamma$ is \emph{extra-large type} if every edge of $\Gamma$ has label at least 4.
In this case, $A_\Gamma$ is 2-dimensional, and thus satisfies the $K(\pi,1)$ conjecture.
In \cite{juhasz2018relatively}, the following condition is introduced. 
Let $H = A_{\Gamma'}$ be a standard parabolic subgroup of $A$ (with $\Gamma' \subseteq \Gamma$ a full subgraph). Then $A$ is \emph{extra-large relative to $H$} (or \emph{$\Gamma'$-relatively extra-large}) if 
\begin{enumerate}
    \item for every edge $\{s,t\}$ of $\Gamma$ with $s \in \Gamma'$ and $t \not\in \Gamma'$, we have $m(s,t) \geq 4$, and
    \item for every edge $\{t,t'\}$ of $\Gamma$ with $t,t' \not\in \Gamma'$, we have $m(t,t') \geq 3$.
\end{enumerate}
It is then shown that $A_\Gamma$ satisfies the word problem or $K(\pi,1)$ conjecture whenever $H$ does. It is in this spirit that we make the following generalization.

Let $\{\Gamma_i\}$ be a finite family consisting of disjoint, non-empty full subgraphs of $\Gamma$ with vertex sets $S = \mathrm{Vert}(\Gamma)$ and $S_i = \mathrm{Vert}(\Gamma_i)$. Suppose also that $S = \bigcup S_i$. In direct analogy to the relatively extra-large condition, we consider
\begin{enumerate}
    \item[(REL)] Every edge of $\Gamma$ between 
     $\Gamma_i$ and  
     $\Gamma_j$ for some $i \not= j$ has label at least $4$.
\end{enumerate}
If this condition is satisfied, we say that $A_\Gamma$ is \emph{$\{\Gamma_i\}$-relatively extra-large}.
We establish the following theorem regarding such Artin groups.

\begin{thm*}
    Suppose $A_\Gamma$ is $\{\Gamma_i\}$-relatively extra-large. Then $A_\Gamma$ satisfies the $K(\pi,1)$ conjecture if and only if each $A_{\Gamma_i}$ does.
\end{thm*}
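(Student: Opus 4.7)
The plan is to prove both directions by building a single geometric model for $A_\Gamma$ from classifying spaces for the $A_{\Gamma_i}$ together with models for dihedral ``bridge'' pieces, and then to transfer asphericity between $A_\Gamma$ and its pieces via subcomplex retractions.

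For the ``if'' direction I would produce a decomposition of $A_\Gamma$ in the style of a complex of groups whose vertex groups are the $A_{\Gamma_i}$ together with edge-dihedral parabolics $A_{\{s,t\}}$ for edges $\{s,t\}$ of $\Gamma$ joining distinct $\Gamma_i$ and $\Gamma_j$, glued along their mutual intersections. By condition (REL), every inter-component dihedral piece is large-type, hence $2$-dimensional and known to be $K(\pi,1)$ by Charney--Davis, while each $A_{\Gamma_i}$ is $K(\pi,1)$ by hypothesis. Gluing these classifying spaces according to the decomposition produces a candidate $K(A_\Gamma,1)$, and the task is to show its universal cover is contractible.

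To verify contractibility I would work with a Deligne-style complex $\widetilde X$ on which $A_\Gamma$ acts, whose cells correspond to cosets of the chosen parabolics ordered by inclusion. Condition (REL) should translate into a systolic or non-positively curved link condition at each vertex corresponding to an $A_{\Gamma_i}$-coset, in direct analogy with Charney--Davis's extra-large type argument: all inter-component edges carry labels $\geq 4$, so the ``new'' geometry added on top of each $A_{\Gamma_i}$ is of extra-large type. From this link condition I would deduce that $\widetilde X$ is contractible, and then, after replacing the $A_{\Gamma_i}$-strata by the hypothesized $K(\pi,1)$ models, identify the quotient with $N(W_\Gamma)$ up to homotopy. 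For the ``only if'' direction the same construction exhibits each $N(W_{\Gamma_i})$ as a retract of $N(W_\Gamma)$ via the standard inclusion $A_{\Gamma_i} \hookrightarrow A_\Gamma$, so asphericity of $N(W_\Gamma)$ descends to $N(W_{\Gamma_i})$.

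The main obstacle I anticipate is verifying the link condition at vertices stabilized by the full parabolics $A_{\Gamma_i}$: the link there simultaneously encodes all inter-component edges incident to $\Gamma_i$ and inherits structure from the Coxeter geometry of $W_{\Gamma_i}$ rather than being a simple flag complex, so the combinatorial consequences of (REL) must be combined with internal information about $A_{\Gamma_i}$. A secondary, more technical, difficulty is reconciling this relative Deligne complex with the conventional Salvetti picture for $A_\Gamma$, so that contractibility of $\widetilde X$ genuinely implies asphericity of $N(W_\Gamma)$; I expect this to be arranged by a deformation retraction between the standard and relative models, in the spirit of the combination arguments of Godelle--Paris.
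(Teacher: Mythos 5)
Your overall strategy coincides with the paper's: build a relative Deligne complex over the poset of cosets of the trivial subgroup, the cyclic subgroups $A_{\{s\}}$, the inter-component dihedral parabolics $A_{\{s,t\}}$, and the $A_{\Gamma_i}$; prove it is $\mathrm{CAT}(0)$ via link conditions; and compare it with $N(W_\Gamma)$ through the Godelle--Paris machinery (the paper enlarges the family to a complete and $K(\pi,1)$ family $\overline{\mathcal S}$, applies their Theorem 3.1, and deformation retracts $\Phi(A,\overline{\mathcal S})$ onto $\hat\Phi$; the ``only if'' direction is their Corollary 2.4, as you essentially say). However, your argument has a genuine gap exactly at the step you defer: you never choose a metric and never verify any link condition, and that is where all the content of the theorem lies. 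Moreover, the obstacle you anticipate at the vertex stabilized by $A_{\Gamma_i}$ is misdiagnosed: in the relative complex the whole group $A_{\Gamma_i}$ is coned off to a single vertex, so its link consists only of cosets $\alpha\cdot 1$ and $\alpha A_{\{s\}}$ with $\alpha\in A_{\Gamma_i}$ and $s$ an endpoint of an inter-component edge; no internal information about $A_{\Gamma_i}$ (Coxeter geometry, large type, flagness) is needed. What is needed is a girth bound: the paper shows every embedded loop in that link has at least $8$ edges, using only van der Lek's theorem on standard parabolics (a $4$-cycle would force $s^k=t^l$ for distinct generators, and a $6$-cycle would force a nontrivial element of $A_{\{s_1\}}\cap A_{\{s_2,s_3\}}$), and then assigns angle $\pi/4$ at that vertex so such loops have length at least $2\pi$. ``In direct analogy with Charney--Davis'' does not supply this, since the stabilizer $A_{\Gamma_i}$ is not spherical and such vertices do not occur in their setting.

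A second, unanticipated, difficulty is that the naive metric (all simplices Euclidean isosceles right triangles, as in the Charney--Davis extra-large case) actually fails the link condition at the vertex $[1]$ whenever $\Gamma$ contains a triangle of inter-component edges (for instance in the join examples): such a triangle yields an embedded $6$-cycle in $\mathrm{lk}_{\hat\Phi}([1])$ through three dihedral vertices whose total length would be $6\cdot\pi/4=3\pi/2<2\pi$. The paper's fix is to skew the triangles containing a non-isolated inter-component dihedral vertex, assigning angle $3\pi/8$ at the identity-coset corner and $\pi/8$ at the dihedral corner; this repairs the link at $[1]$, and the damage at the dihedral vertex is absorbed because embedded loops there have at least $4m(s,t)$ edges --- a girth estimate proved via the Appel--Schupp syllable-length bound for dihedral Artin groups --- and (REL) gives $m(s,t)\geq 4$, so $4m\cdot\pi/8\geq 2\pi$. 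Your plan contains neither the syllable-length girth estimate at the dihedral vertices nor any indication that the angles must be skewed, so the central $\mathrm{CAT}(0)$ claim remains unproved as stated. The remaining ingredients of your outline (feeding in the $K(\pi,1)$ hypothesis on the $A_{\Gamma_i}$, identifying the model with the universal cover of $N(W_\Gamma)$, and deducing the ``only if'' direction) do match the paper, where they are carried out by checking that every spherical subset lies in $\overline{\mathcal S}$ --- two incident inter-component edges labelled $\geq 4$ cannot occur in a spherical triple --- applying Godelle--Paris Theorem 3.1, and retracting $\Phi$ onto $\hat\Phi$.
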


In fact, a somewhat stronger fact can be established using our methods. 
Instead of (REL), consider

\begin{enumerate}
    \item[(REL$'$)] If $e$ is an edge of $\Gamma$ between $\Gamma_i$ and $\Gamma_j$ for some $i \not= j$ and $e$ shares a vertex with a distinct edge between $\Gamma_i$ and $\Gamma_k$ for some $i \not= k$, then $e$ has label at least $4$.
\end{enumerate}
Specifically, this allows edges which are isolated among those edges between the subgraphs in the family $\{\Gamma_i\}$ to have label 2 or 3.
We show

\begin{restatable}{theorem}{relkpone}
\label{thm:relkp1}
    Suppose $\Gamma$ and $\{\Gamma_i\}$ satisfy \textnormal{(REL$'$)}. Then $A_\Gamma$ satisfies the $K(\pi,1)$ conjecture if and only if each $A_{\Gamma_i}$ does.
\end{restatable}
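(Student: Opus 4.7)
The plan is to reduce the theorem to a statement about the asphericity of the Salvetti complex $\mathrm{Sal}_\Gamma$ of $A_\Gamma$, whose universal cover $\widetilde{\mathrm{Sal}}_\Gamma$ is contractible if and only if $A_\Gamma$ satisfies the $K(\pi,1)$ conjecture. As a first step, I would prove a combinatorial lemma encoding the geometric strength of (REL$'$): every spherical-type standard parabolic of $A_\Gamma$ is either contained entirely in some $A_{\Gamma_i}$ or is a rank-$2$ dihedral subgroup generated by a single inter-component edge. This relies on the classification of rank-$3$ spherical Artin groups (types $A_3$, $B_3$, $H_3$) to observe that no triangle of $\Gamma$ can have all three labels $\geq 4$; the isolation clause in (REL$'$) then rules out any remaining triangle involving a low-label inter-component edge.

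For the forward direction, the lemma lets me describe the cellular structure of $\widetilde{\mathrm{Sal}}_\Gamma$ as built from ``local sheets'' $g \cdot \widetilde{\mathrm{Sal}}_{\Gamma_i}$ (one per coset $gA_{\Gamma_i}$) glued together along ``dihedral slabs'' coming from inter-component edges. Each sheet is contractible by hypothesis, and each slab is contractible since dihedral Artin groups are spherical-type and hence satisfy $K(\pi,1)$ by Deligne. The plan is then to mimic the collapsing argument of Juh\'asz \cite{juhasz2018relatively}, realizing $\widetilde{\mathrm{Sal}}_\Gamma$ as a homotopy colimit of these contractible pieces over a simply-connected ``pattern'' complex (essentially a tree of sheets joined by slabs), whence $\widetilde{\mathrm{Sal}}_\Gamma$ is itself contractible.

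For the converse direction, I would exploit the same combinatorial lemma to construct an $A_{\Gamma_i}$-equivariant deformation retraction of $\widetilde{\mathrm{Sal}}_\Gamma$ onto the local sheet corresponding to the trivial coset of $A_{\Gamma_i}$. On a mixed dihedral slab, the retraction is given by projecting onto the component-$i$ coordinate of the dihedral subgroup, and these local projections extend consistently thanks to the isolation clause. Contractibility of $\widetilde{\mathrm{Sal}}_\Gamma$ then passes to its retract $\widetilde{\mathrm{Sal}}_{\Gamma_i}$. The principal obstacle I expect is verifying, in both directions, that the gluing along dihedral slabs with label $2$ or $3$ remains tree-like — in particular, that no unanticipated higher-rank cell appears when two inter-component edges meet. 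The isolation clause in (REL$'$) is precisely what forbids this, so a careful case analysis there will be the technical heart of the argument; once it is in place, the rest reduces to standard homotopy-colimit and retraction techniques for aspherical complexes.
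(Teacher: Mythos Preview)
Your combinatorial lemma on spherical-type parabolics is correct and is exactly what the paper proves (its Lemma~3.2) to feed into the Godelle--Paris machinery. The converse direction, however, needs none of the apparatus you propose: it is a general fact (Godelle--Paris, Cor.~2.4) that every standard parabolic of an Artin group satisfying the $K(\pi,1)$ conjecture also satisfies it, with no hypothesis on $\Gamma$ at all. Your equivariant retraction of $\widetilde{\mathrm{Sal}}_\Gamma$ onto a single sheet is both unnecessary and dubious, since inter-component edges attach cells to the sheet that have no canonical image inside it.

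The real gap is in the forward direction. Your homotopy-colimit scheme is in spirit the same reduction the paper uses: once the ``pattern complex'' governing the gluing of sheets and slabs is known to be contractible, the conclusion follows. But this pattern is \emph{not} a tree, nor ``essentially'' one. Already when each $\Gamma_i$ is a single vertex (so $A_\Gamma$ is extra-large type) the pattern is the full $2$-dimensional Deligne complex; and whenever $\Gamma$ contains a triangle with vertices in distinct components, the pattern has honest $2$-cells and loops in its $1$-skeleton. Simple connectivity is cheap here (it drops out of complex-of-groups generalities), but \emph{contractibility} of this $2$-complex is the entire content of the theorem and is where (REL$'$) is actually consumed. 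The paper obtains it by endowing a Deligne-like complex $\hat\Phi$ with a carefully chosen piecewise-Euclidean metric and verifying the link condition vertex by vertex; the angles are tuned so that the label~$\geq 4$ forced by (REL$'$) on non-isolated inter-component edges makes every embedded loop in each link have length at least $2\pi$, yielding $\mathrm{CAT}(0)$ and hence contractibility. Your proposal supplies no substitute for this step, and the Juh\'asz argument you cite does not export: it depends on his extra condition that the complement of the distinguished subgraph be large type, precisely the hypothesis (REL$'$) is designed to drop.
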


In addition to this, we are able to show under mild hypotheses that Artin groups satisfying (REL$'$) are also acylindrically hyperbolic.
Acylindrical hyperbolicity is a property of interest for many groups, including Artin groups. Some of the classes for which acylindrical hyperbolicity is known for include
\begin{enumerate}
    \item right-angled Artin groups ($m(s,t) = 2$ for each edge of $\Gamma$) which are not cyclic or a direct product of non-trivial subgroups (in \cite{osin2016acylindrically}),
    \item spherical-type Artin groups (in \cite{calvez2017}),
    \item type FC Artin groups whose defining graph has diameter at least $3$ (in \cite{chatterji2016note}),
    \item extra-extra-large type Artin groups (meaning $m(s,t) \geq 5$ for each edge $\{s,t\}$ of the defining graph) of rank at least $3$ (in \cite{haettel2019xxl}),
    \item Artin groups $A_\Gamma$ such that $\Gamma$ is not a join of two subgraphs $\Gamma_1$, $\Gamma_2$ (in \cite{charney2019artin}), 
    \item affine-type Artin groups (in \cite{calvez2020euclidean}),
    \item 2-dimensional Artin groups of hyperbolic type (meaning the associated Coxeter group is hyperbolic) (in \cite{martin2019acylindrical}), and
    \item 2-dimensional Artin groups (in \cite{vaskou2021acylindrical}). 
\end{enumerate}
We show acylindrical hyperbolicity in our setting as well:
\begin{restatable}{theorem}{acynhyp}
\label{thm:acynhyp}
    Suppose $A_\Gamma$ and $\{\Gamma_i\}_{i=1}^n$, $n \geq 2$ satisfy \textnormal{(REL$'$)}. In addition, assume $|\mathrm{Vert}(\Gamma)| \geq 3$ and not all edges between the family $\{\Gamma_i\}$ have label 2. Then $A_\Gamma$ is acylindrically hyperbolic.
\end{restatable}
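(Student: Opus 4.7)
My approach is to apply the Minasyan--Osin splitting criterion for acylindrical hyperbolicity: a group acting minimally on a simplicial tree with no global fixed point, possessing a weakly malnormal edge stabilizer, is either virtually cyclic or acylindrically hyperbolic. The plan is to use the decomposition $\Gamma = \bigsqcup_i \Gamma_i$ and condition (REL$'$) to produce such a tree action for $A_\Gamma$, with edge stabilizers coming from the dihedral parabolics corresponding to inter-family edges of label at least $3$.

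In the two-subgraph case $n = 2$, there is a natural amalgamated product decomposition of the form $A_\Gamma = A_{\Gamma_1 \cup T} *_{A_{S \cup T}} A_{\Gamma_2 \cup S}$, where $S \subseteq \Gamma_1$ and $T \subseteq \Gamma_2$ denote the sets of endpoints of inter-family edges in each $\Gamma_i$; van der Lek's theorem ensures the amalgamating subgroup embeds correctly and that all inter-family braid relations are captured inside both vertex groups. For general $n$, I would iterate this construction or pass to a graph-of-groups structure. The resulting Bass-Serre tree carries a non-trivial $A_\Gamma$-action provided some vertex of $\Gamma$ is not an endpoint of any inter-family edge. Given such a tree, weak malnormality is a short computation: pick an inter-family edge $\{s,t\}$ with $m(s,t) \geq 3$, which exists by the assumption that not all inter-family edges have label $2$. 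Both $\langle s\rangle$ and $t\langle s\rangle t^{-1}$ lie in the parabolic $A_{\{s,t\}}$ by van der Lek, so their intersection reduces to a computation in a dihedral Artin group of label $\geq 3$, where a Garside or abelianization argument shows it is trivial. Since $|\mathrm{Vert}(\Gamma)| \geq 3$ and a large-type dihedral parabolic is present, $A_\Gamma$ contains a non-abelian free subgroup, so it is not virtually cyclic, and Minasyan--Osin concludes.

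The main obstacle I expect is producing a non-trivial Bass-Serre splitting in every case covered by (REL$'$). When each $\Gamma_i$ is small enough that every vertex of $\Gamma$ participates in some inter-family edge, the obvious amalgamation degenerates and a more delicate construction is needed --- either a finer graph-of-groups decomposition extracted from the geometric model built in the proof of Theorem~\ref{thm:relkp1}, an appeal to Vaskou's 2-dimensional result in the cases it covers, or a direct exhibition of a loxodromic WPD element (e.g.~the Garside-type element $st$) acting on an ambient hyperbolic complex constructed from the $\{\Gamma_i\}$-decomposition. The hypothesis that not all inter-family edges have label $2$ is the essential ingredient that provides the ``large-type'' dihedral piece needed for weak malnormality; the conditions $n \geq 2$ and $|\mathrm{Vert}(\Gamma)| \geq 3$ serve to rule out the degenerate low-rank cases.
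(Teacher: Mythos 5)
There is a genuine gap, and it is the one you yourself flag: your construction only produces a usable tree action when the amalgamation is proper, i.e.\ when $S \subsetneq \mathrm{Vert}(\Gamma_1)$ and $T \subsetneq \mathrm{Vert}(\Gamma_2)$, and (REL$'$) gives no control over this. In the paper's motivating example $\Gamma$ is the \emph{join} of two $\widetilde C_3$ graphs with all joining edges labelled $4$, so every vertex is an endpoint of an inter-family edge, both vertex groups of your splitting are all of $A_\Gamma$, and the Bass--Serre tree is a point. Thus the degenerate case is not a corner case but the central one the theorem is meant to cover, and none of your fallbacks closes it: Vaskou's theorem applies only when $A_\Gamma$ is $2$-dimensional, which fails exactly in the intended generality (some $\Gamma_i$ spherical or affine of rank $\geq 3$); ``a finer graph-of-groups decomposition'' and ``a direct WPD element'' are not exhibited. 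The paper's proof is precisely the missing content: $A_\Gamma$ acts on the $\mathrm{CAT}(0)$ Deligne-like complex $\hat\Phi$ built for Theorem~\ref{thm:relkp1}, and Martin's criterion is applied at the vertex $[A_{\{s_i,s_j\}}]$ for an inter-family edge with $m(s_i,s_j)\geq 3$, with unbounded link orbits coming from unboundedness of syllable length in dihedral Artin groups (Vaskou, Lemma 4.5) and weak malnormality of $A_{\{s_i,s_j\}}$ coming from Vaskou's Lemma 5.7 applied to a connected, $2$-dimensional, non-right-angled triangle parabolic $A_{\{s,s_i,s_j\}}$, which exists by (REL$'$) and $|\mathrm{Vert}(\Gamma)|\geq 3$.

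Separately, even in the cases where your splitting is proper, the malnormality step addresses the wrong subgroup. The edge stabilizer of your amalgam is $A_{S\cup T}$ (the parabolic on \emph{all} endpoints of inter-family edges), not the dihedral $A_{\{s,t\}}$; showing $\langle s\rangle \cap t\langle s\rangle t^{-1}=1$ inside a dihedral Artin group says nothing about $A_{S\cup T}\cap gA_{S\cup T}g^{-1}$ being finite for some $g$, which is what Minasyan--Osin requires, and such weak malnormality of a large parabolic is not a routine computation (it may well fail). Even weak malnormality of a single dihedral parabolic in $A_\Gamma$ requires a real argument --- in the paper it is exactly Vaskou's Lemma 5.7, together with the observation that its proof allows one to take the dihedral to be $A_{\{s_i,s_j\}}$ itself. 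So as written the proposal establishes the theorem neither in the degenerate (join-type) case nor, without substantial additional work on the edge group, in the non-degenerate one.
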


We note that the conditions in Theorem \ref{thm:relkp1} include the original relatively extra-large condition of Juh\'asz as a special case. Suppose $A_\Gamma$ is $\Gamma'$-relatively extra-large (in the sense of \cite{juhasz2018relatively}). Let $\Gamma''$ be the full subgraph on the vertices of $\Gamma$ which are not in $\Gamma'$. Then $A_\Gamma$ is $\{\Gamma', \Gamma''\}$-relatively extra-large in our sense.
The condition (2) in the definition of $\Gamma$-relatively extra-large is equivalent to requiring that $A_{\Gamma''}$ be large type (ie, all edge labels are at least 3). Then
$A_{\Gamma''}$ satisfies the $K(\pi,1)$ conjecture as $A_{\Gamma''}$ is 2-dimensional. Thus according to our result, $A_\Gamma$ satisfies the $K(\pi,1)$ conjecture if and only if $A_{\Gamma'}$ does.

Our Theorems include many new examples for which the $K(\pi,1)$ conjecture and/or acylindrical hyperbolicity was not previously known. As one example, consider two graphs $\Gamma_1$, $\Gamma_2$ of type $\widetilde C_3$ (see Figure \ref{fig:C3artingp}).
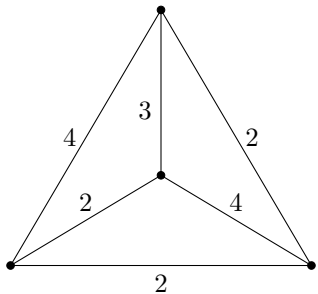
\begin{figure}[h!]
    \centering
    \def\scalepic{2}
    \begin{tikzpicture}
        \coordinate (a) at (0,0);
        \coordinate (b) at (0*\scalepic,1.1*\scalepic);
        \coordinate (c) at (1*\scalepic,-0.6*\scalepic);
        \coordinate (d) at (-1*\scalepic,-0.6*\scalepic);

        \draw (a) -- (b) node[midway, below left]  {3};
        \draw (a) -- (c) node[midway, above]  {4};
        \draw (a) -- (d) node[midway, above] {2};
        \draw (b) -- (c) node[midway, right]  {2};
        \draw (b) -- (d) node[midway, left] {4};
        \draw (d) -- (c) node[midway, below] {2};
        
        \draw[fill] (a) circle [radius=0.05];
        \draw[fill] (b) circle [radius=0.05];
        \draw[fill] (c) circle [radius=0.05];
        \draw[fill] (d) circle [radius=0.05];
    \end{tikzpicture}
    \caption{A defining graph of type $\widetilde C_3$} \label{fig:C3artingp}
\end{figure}
These defining graphs generate an affine Artin group, and thus satisfy the $K(\pi,1)$ conjecture.
Then, let $\Gamma$ be the join of $\Gamma_1$ and $\Gamma_2$ with each of the new edges labeled by $4$ (or greater). It is quickly checked that $A_\Gamma$ satisfies none of the previously listed conditions. But, $\Gamma$ and $\{\Gamma_1,\Gamma_2\}$ satisfy (REL$'$), and each $A_{\Gamma_i}$ satisfies the $K(\pi,1)$ conjecture, so $A_\Gamma$ does. In addition, none of the edges between $\Gamma_1$ and $\Gamma_2$ are labeled 2, so $A_\Gamma$ is acylindrically hyperbolic.
More generally, if a clique $\Gamma$ with at least 3 vertices is extra-large relative to a family $\{\Gamma_i\}$, then $A_\Gamma$ is acylindrically hyperbolic, and if each $\Gamma_i$ satisfies the $K(\pi,1)$ conjecture, then $A_\Gamma$ does as well.

We also note that our methodology for proving Theorem \ref{thm:relkp1} differs from Juh\'asz' original work, allowing us to drop his condition (2) and treat more general defining graphs. This also allows us to easily prove acylindrical hyperbolicity. We hope that this method may be adapted for other similar restrictions on $A_\Gamma$.
Namely, our strategy for proving Theorem \ref{thm:relkp1} is as follows. In Section 1, we construct a simplicial complex as a variation of the usual Deligne complex. We show the complex is $\mathrm{CAT}(0)$, hence contractible, in Section 2. Then in Section 3, we show that this complex is homotopy equivalent to the universal cover of $N(W)$ by a result of Godelle and Paris \cite{godelle2012k}. 
In Section 4, we prove Theorem \ref{thm:acynhyp} using recent results of Vaskou \cite{vaskou2021acylindrical}.

We would also like to note that the conditions (REL) and (REL$'$) can be naturally relaxed to allow edges with label at least 3, which would define a \emph{relatively large type} condition. This case is also currently of interest to the author; however, it is somewhat more complicated
than the current case of consideration. 

The author would like to extend great thanks to Mike Davis and Jingyin Huang for their helpful comments and advice given through the writing of this paper.

\section{The Deligne-like complex}

Before we define our complex, we wish to establish a lemma in Artin groups similar to a well-known property of cosets of standard parabolic subgroups of Coxeter groups. 
We include a proof for the reader's convenience. We make heavy use of this result in the subsequent sections. 

\begin{lemma} \label{lemma:cosetinclusion}
    Suppose $(A,S)$ is an Artin-Tits system, $\alpha,\alpha' \in A$, and $T, T' \subseteq S$. Then if $\alpha A_T \subseteq \alpha' A_{T'}$, we have $\alpha^{-1}\alpha' \in  A_{T'}$ and $T \subseteq T'$.
\end{lemma}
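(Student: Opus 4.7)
The plan is to peel the two conclusions apart: first extract $\alpha^{-1}\alpha' \in A_{T'}$ from the mere statement that $\alpha$ itself lies in $\alpha' A_{T'}$, and then leverage the inclusion on individual generators $s \in T$ to force $s$ into $A_{T'}$, finally upgrading ``$s \in A_{T'}$'' to ``$s \in T'$'' via the projection to the associated Coxeter group.

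First I would note that $1 \in A_T$, so $\alpha = \alpha \cdot 1 \in \alpha A_T \subseteq \alpha' A_{T'}$. Hence there exists $\beta \in A_{T'}$ with $\alpha = \alpha'\beta$, giving $\alpha^{-1}\alpha' = \beta^{-1} \in A_{T'}$ immediately, which is the first assertion.

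For the inclusion $T \subseteq T'$, pick any $s \in T$. Then $\alpha s \in \alpha A_T \subseteq \alpha' A_{T'}$, so $\alpha s = \alpha'\gamma$ for some $\gamma \in A_{T'}$. Combined with $\alpha = \alpha'\beta$, this yields $s = \beta^{-1}\gamma \in A_{T'}$. So the problem reduces to the key claim: if $s \in S$ and $s \in A_{T'}$ (viewed as a subgroup of $A_S$ via van der Lek's injection), then $s \in T'$.

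The main — and really only — obstacle is this last claim, which is not tautological: van der Lek gives injectivity of $A_{T'} \hookrightarrow A_S$, but we need the stronger statement $A_{T'} \cap S = T'$. The cleanest way I would handle this is to apply the canonical surjection $\pi \colon A_\Gamma \to W_\Gamma$. Under $\pi$, the standard parabolic $A_{T'}$ maps onto the standard parabolic $W_{T'}$ of the Coxeter group (since $\pi$ is defined on generators), so $s \in A_{T'}$ implies $\pi(s) \in W_{T'}$. But $\pi(s)$ is just the Coxeter generator $s$, and the classical result for Coxeter groups states $W_{T'} \cap S = T'$; hence $s \in T'$, completing the proof. (Alternatively one can avoid the Coxeter group altogether by noting that the abelianization $A_\Gamma^{\mathrm{ab}}$, or more precisely the component-wise abelianization ensuring the braid relations survive, distinguishes generators in $T'$ from those outside, but the Coxeter-group route is shorter and standard.)
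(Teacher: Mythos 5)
Your proof is correct, and it shares the paper's central device---projecting to the Coxeter group $W_\Gamma$ and invoking classical facts about standard parabolic subgroups---but the decomposition is genuinely different. The paper pushes the entire coset inclusion down to $W_\Gamma$, obtaining $wW_T \subseteq w'W_{T'}$, and cites Bourbaki's theorem on inclusions of cosets of standard parabolics to conclude $T \subseteq T'$; it then recovers $\alpha^{-1}\alpha' \in A_{T'}$ indirectly, by noting that $\alpha A_{T'}$ and $\alpha'A_{T'}$ both contain $\alpha A_T$ and hence coincide. You instead get the coset statement essentially for free inside the Artin group (since $1 \in A_T$ gives $\alpha \in \alpha'A_{T'}$ at once---cleaner than the paper's route to this half), and you reduce $T \subseteq T'$ to the single claim that a generator $s$ lying in $A_{T'}$ must lie in $T'$, which you settle via the quotient map and the standard Coxeter fact $W_{T'} \cap S = T'$ rather than the coset-inclusion theorem. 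Both Bourbaki inputs are classical, so the arguments are of comparable depth; yours isolates more transparently which fact does the work, and indeed your key claim could equally be closed with van der Lek's intersection theorem $A_{\{s\}} \cap A_{T'} = A_{\{s\}\cap T'}$ (which the paper already uses in Lemma \ref{lemma:prodgen}), avoiding the Coxeter group altogether. One caution: your parenthetical abelianization alternative does not work as stated, because generators joined by odd-labeled edges are identified in the abelianization of $A_\Gamma$, so it cannot in general separate a generator outside $T'$ from those inside; since you rely on the Coxeter route, this aside does not affect the validity of your proof.
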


\begin{proof}
    Let $w$ and $w'$ be the image of $\alpha$ and $\alpha'$, respectively, under the quotient homomorphism $A_\Gamma \to W_\Gamma$. The inclusion $\alpha A_T \subseteq \alpha' A_{T'}$ is preserved under the quotient map, giving us the relation $w W_T \subseteq w' W_{T'}$ in $W_\Gamma$. So, by \cite[Ch.~IV \S 8 Th.~2(iii)]{bourbaki2008lie}, we have $T \subseteq T'$ as subsets of $W_\Gamma$. Since the quotient map is bijective on the generators, this gives $T \subseteq T'$ viewed in $A_\Gamma$.
    
    To see that $\alpha$ and $\alpha'$ must be in the same $A_{T'}$-coset, note that 
    $\alpha A_{T} \subseteq \alpha A_{T'}$ as well as $\alpha A_T \subseteq \alpha' A_{T'}$, so $\varnothing \not= \alpha A_T \subseteq \alpha A_{T'} \cap \alpha' A_{T'}$. Since cosets partition the group and these cosets have non-empty intersection, they must be the same.
\end{proof}

We also briefly give a restatement of a result of van der Lek.

\begin{lemma} \label{lemma:prodgen}
    If $(A,S)$ is an Artin-Tits system and $s \in S$, then $s$ cannot be written as a product of the elements of $S \setminus \{s\}$.
\end{lemma}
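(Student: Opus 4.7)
The plan is to reduce this to the corresponding, well-known statement for Coxeter systems via the natural surjection $A_\Gamma \twoheadrightarrow W_\Gamma$, which is the identity on the generating set $S$. Suppose for contradiction that $s$ can be written as a word in $S \setminus \{s\}$ inside $A_\Gamma$. Applying the quotient map, the same word represents $s$ in $W_\Gamma$, so $s$ lies in the standard parabolic subgroup $W_{S \setminus \{s\}}$ of the Coxeter system $(W_\Gamma, S)$.

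I would then invoke the standard characterization of elements of parabolic subgroups of Coxeter systems: if $w \in W_T$ for some $T \subseteq S$, then every reduced expression for $w$ uses only letters from $T$ (this is contained in \cite[Ch.~IV \S 1.8]{bourbaki2008lie}, the same reference already used in the proof of Lemma \ref{lemma:cosetinclusion}). Specializing to $w = s$ and $T = S \setminus \{s\}$ would force every reduced expression for $s$ to avoid the letter $s$ itself; but $s$ is already a reduced word of length one for $s$, so this is a contradiction.

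I do not expect any real obstacle. The only subtle point is confirming that the subgroup of $W_\Gamma$ generated by $S \setminus \{s\}$ really is the standard parabolic $W_{S \setminus \{s\}}$ to which the Bourbaki statement applies, but this is immediate since the quotient map is the identity on generators. Alternatively, one could argue directly from van der Lek's injection $A_{S \setminus \{s\}} \hookrightarrow A_\Gamma$ \cite{vdl1983}, noting that its image is exactly the subgroup generated by $S \setminus \{s\}$, and then applying the Coxeter-quotient argument above to see that $s$ is not in that image.
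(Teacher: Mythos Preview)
Your argument is correct, and it takes a genuinely different route from the paper's. The paper invokes van der Lek's intersection formula $A_{\{s\}}\cap A_{S\setminus\{s\}}=A_{\{s\}\cap(S\setminus\{s\})}=A_\varnothing=1$ to conclude $s\notin A_{S\setminus\{s\}}$, whereas you push everything down to the Coxeter quotient and use the standard Bourbaki fact that reduced expressions of elements of $W_T$ only involve letters from $T$. Your approach is strictly more elementary: it needs nothing about Artin groups beyond the existence of the surjection $A_\Gamma\to W_\Gamma$, while van der Lek's intersection result is a nontrivial topological statement. On the other hand, the paper's route is more in keeping with the rest of the paper, which already relies on van der Lek's results elsewhere (e.g.\ in the Case~1 link computation), so there is no real economy gained there by avoiding it. Your closing ``alternatively'' sentence is a bit redundant---once the Coxeter-quotient argument shows $s\notin\langle S\setminus\{s\}\rangle$, the van der Lek injection adds nothing further for this particular lemma---but it does no harm.
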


\begin{proof}
    By van der Lek's thesis \cite{vdl1983}, 
    \[
        A_{\{s\}} \cap A_{S \setminus \{s\}} \cong A_{\{s\} \cap S \setminus \{s\}} = A_{\varnothing} = 1.
    \]
    Thus in particular, $s \not \in A_{S \setminus \{s\}}$. Since $A_{S \setminus \{s\}}$ is the collection of all possible products of the generators $S \setminus \{s\}$, the result follows.
\end{proof}

\subsection{Definition of the complex}

Through the rest of the paper, we let $A = A_\Gamma$ be an Artin group such that $\Gamma$ and $\{\Gamma_i\}$ satisfy (REL$'$), with $S_i = \mathrm{Vert}(\Gamma_i)$ and $A_i = A_{\Gamma_i}$.

We now introduce a simplicial complex based on our distinguished subgroups $A_i$ of $A$ analogous to the Deligne complex. To do this, we mimic the construction of the Deligne complex in \cite{charney1995k}, but replace the poset of spherical generating sets with the following set.

\begin{defn}
    Let $\mathcal S^\ell$ be the set of all $T \subseteq S$ satisfying either
    \begin{enumerate}
        \item $T = \varnothing$ (in which case $A_T = 1$, the trivial subgroup of $A$),
        \item $T = S_i$,
        \item $T = \{s_i,s_j\}$ for vertices $s_i \in S_i$, $s_j \in S_j$ of an edge between $\Gamma_i$ and $\Gamma_j$ with $i\not=j$, or
        \item $T = \{s\}$ for a vertex $s$ of an edge between $\Gamma_i$ and $\Gamma_j$, $i\not= j$.
    \end{enumerate}
    With this, we define
    \[ A\mathcal S^\ell = \{\, \alpha A_T \, : \, \alpha \in A, T \in \mathcal S^\ell \,\}, \]
    and order these sets by inclusion.
    We then let $X$ denote the geometric realization of the derived complex of $\mathcal S^\ell$ and $\hat \Phi$ denote the geometric realization of the derived complex of $A \mathcal S^\ell$ (recall that the derived complex of a poset is the set of chains in the poset ordered by inclusion of chains).
\end{defn}

 We will denote an $n$-simplex of $\hat \Phi$ by
\[ [ \alpha_0 A_{T_0},\, \alpha_1 A_{T_1},\, \dots,\, \alpha_n A_{T_n} ]\]
where $\alpha_0 A_{T_0} < \alpha_1 A_{T_1} < \dots < \alpha_n A_{T_n}$ is a chain in $A \mathcal S^\ell$. We use similar notation for simplices of $X$.
Notice that $\hat \Phi$ inherits a natural left action of $A$ with fundamental domain isomorphic to $X$ via the simplicial map induced by the set map $T \mapsto A_T$. 
    
We note that if one replaces $\mathcal S^\ell$ by $\mathcal S^f$, the set of $T \subseteq S$ so that $A_T$ is spherical-type, then the definition of the (modified) Deligne complex of Charney and Davis \cite{charney1995k} is recovered. To further borrow their notation, we will let $K$ denote the geometric realization of the derived complex of $\mathcal S^f$, let $A \mathcal S^f$ denote the cosets of $A_T$ for $T \in \mathcal S^f$, and let $\Phi_M = \Phi_M(A_\Gamma)$ denote the geometric realization of the derived complex of $A \mathcal S^f$.

The rest of this section and the next is dedicated to showing that $\hat \Phi$ is $\mathrm{CAT}(0)$. First, we show that $\hat \Phi$ is simply connected, then endow it with a metric of non-positive curvature.

To show that $\hat \Phi$ is simply connected, we will use basic facts about complexes of groups. We will only need the fact that the action of $A_\Gamma$ on $\hat \Phi$ has a complex of groups structure briefly, so we will summarize the basic argument here, and refer the reader to \cite{Haefliger92Extension} for more details on complexes of groups. 

\begin{lemma} \label{lemma:simcon}
    The complex $\hat \Phi$ is simply connected.
\end{lemma}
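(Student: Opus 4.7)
The plan is to apply standard machinery from the theory of complexes of groups (see \cite{Haefliger92Extension}) to identify $\hat\Phi$ with the development of a developable complex of groups whose fundamental group is $A$. Since the development of a complex of groups is always simply connected, this will yield the result.

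First I would observe that $X$ itself is contractible. Because $\varnothing \in \mathcal{S}^\ell$ is the unique minimum of the poset, every chain in $\mathcal{S}^\ell$ extends to one beginning with $\varnothing$, so $X$ is the simplicial cone on the derived complex of $\mathcal{S}^\ell \setminus \{\varnothing\}$ with apex the vertex $[\varnothing]$. Next, using Lemma \ref{lemma:cosetinclusion}, the stabilizer in $A$ of any simplex $[\alpha_0 A_{T_0} < \cdots < \alpha_n A_{T_n}]$ of $\hat\Phi$ is the conjugate $\alpha_0 A_{T_0} \alpha_0^{-1}$, where $T_0$ is the minimum of the chain. The $A$-action therefore endows $X$ with a complex-of-groups structure $G(X)$: each vertex $[A_T]$ carries the local group $A_T$, and each edge $[A_T < A_{T'}]$ carries the inclusion $A_T \hookrightarrow A_{T'}$. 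Since $G(X)$ arises from a group action, it is automatically developable.

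The key step is the verification that $\pi_1(G(X)) \cong A$. Because $X$ is simply connected, this fundamental group coincides with the colimit of the diagram of local groups along the inclusion morphisms. I would present this colimit directly: the generating set is $S = \bigcup_i S_i$ (each $s \in S$ lies in some $A_{S_i}$ with $S_i \in \mathcal{S}^\ell$), and the relations are exactly the braid relations occurring inside each $A_T$ for $T \in \mathcal{S}^\ell$. The critical combinatorial point is that every edge $\{s,t\}$ of $\Gamma$ is covered by some $T \in \mathcal{S}^\ell$: either $\{s,t\} \subseteq S_i$ for some $i$, or $\{s,t\}$ is a cross-edge, in which case $\{s,t\} \in \mathcal{S}^\ell$ directly by construction. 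Hence every defining braid relation of $A$ is present, and the colimit recovers exactly the Artin presentation of $A$.

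The standard result on complexes of groups then says that $\hat\Phi$, having quotient $X$, stabilizers matching the local groups of $G(X)$, and with $A = \pi_1(G(X))$, is $A$-equivariantly isomorphic to the universal cover $\widetilde{G(X)}$, and is therefore simply connected. The main obstacle in this approach is the colimit computation in the previous paragraph; it hinges on the combinatorial observation that $\mathcal{S}^\ell$ is large enough to cover every edge of $\Gamma$ by one of its members. It is worth noting that condition (REL$'$) plays no role in this lemma; it will instead enter later when establishing the CAT(0) property.
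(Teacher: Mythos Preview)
Your proposal is correct and follows essentially the same route as the paper: both arguments equip $X$ (a cone on $[\varnothing]$, hence simply connected) with the complex-of-groups structure induced by the $A$-action, identify $\pi_1$ of this complex of groups with the colimit of the $A_T$'s, and conclude that $\hat\Phi$ is simply connected via Haefliger's machinery. Your version is in fact slightly more explicit than the paper's, since you spell out why the colimit presentation recovers all of $A$'s braid relations (every edge of $\Gamma$ lies in some $T\in\mathcal S^\ell$), a point the paper asserts without comment; the only cosmetic difference is that the paper phrases the final step via the Borel construction $\hat\Phi\times_A EA$ rather than invoking the development directly.
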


\begin{proof}
    The stabilizer of a vertex $[\alpha A_T]$ of $\hat \Phi$ is the subgroup $\alpha A_T \alpha^{-1}$ of $A$. Thus $A$ acts on $\hat \Phi$ without inversion. The complex $X$ is homeomorphic to the quotient $\hat \Phi / A$ via the simplicial map induced by $T \mapsto A_T$. In addition, $X$ is simply connected, as $[\varnothing]$ is a cone point in $X$. This information determines a complex of groups \cite[Section 2.1]{Haefliger92Extension}, which we denote by $A(X)$. The edge maps are the usual inclusion maps $A_T \hookrightarrow A_{T'}$. Note that this complex is developable by definition. 
    
    Since $X$ is simply connected, $\pi_1(A(X))$ is the colimit of the groups $A_T$ along the inclusion maps \cite[Section 2.7]{Haefliger92Extension}, implying $\pi_1(A(X)) = A$. It follows that the classifying space of $A(X)$ is $BA(X) = \hat\Phi \times_A EA$ \cite[Prop. 3.2.3]{Haefliger92Extension}, and thus the universal cover is
    \[
        \widetilde{BA}(X) = \hat\Phi \times EA,
    \]
    which is homotopy equivalent to $\hat \Phi$. This shows that $\hat \Phi$ is simply connected. 
\end{proof}

\subsection{The metric on \texorpdfstring{$\hat \Phi$}{Phi hat}}

In order to put a metric on $\hat \Phi$, we first note the following
\begin{lemma}
    The complex $\hat \Phi$ is 2-dimensional.
\end{lemma}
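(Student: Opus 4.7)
The plan is to reduce the question to a combinatorial statement about maximal chains in the poset $\mathcal{S}^\ell$, and then perform a short case analysis on the four types of elements in $\mathcal{S}^\ell$.

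First I would invoke Lemma \ref{lemma:cosetinclusion} to reduce to a statement about $\mathcal{S}^\ell$ alone. Specifically, if $\alpha A_T \subsetneq \alpha' A_{T'}$ is a strict inclusion of cosets, then Lemma \ref{lemma:cosetinclusion} yields $T \subseteq T'$; and $T = T'$ would force the two cosets to either coincide or be disjoint, contradicting strict inclusion. Therefore $T \subsetneq T'$, so every chain $\alpha_0 A_{T_0} < \alpha_1 A_{T_1} < \cdots < \alpha_n A_{T_n}$ in $A\mathcal{S}^\ell$ projects to a strictly increasing chain $T_0 \subsetneq T_1 \subsetneq \cdots \subsetneq T_n$ in $\mathcal{S}^\ell$. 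It thus suffices to show that every chain in $\mathcal{S}^\ell$ has length at most $3$ (i.e., at most $3$ elements), corresponding to a $2$-simplex in the derived complex.

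Next I would analyze the possible strict inclusions between the four types of elements of $\mathcal{S}^\ell$ described in the definition. The empty set sits strictly below everything else, so ignoring it, it suffices to bound the length of chains among the non-empty elements by $2$. I would argue as follows:
\begin{itemize}
\item Any type (3) pair $\{s_i, s_j\}$ with $s_i \in S_i$, $s_j \in S_j$, $i \neq j$, is maximal among non-empty elements: the only non-empty elements of $\mathcal{S}^\ell$ with more than $2$ elements are the sets $S_k$, and since the $S_k$ are pairwise disjoint we cannot have both $s_i$ and $s_j$ in a single $S_k$.
\item Any type (2) set $S_i$ is maximal among non-empty elements: other $S_j$ are disjoint from it, and no type (3) pair $\{s_i, s_j\}$ (with $s_j \notin S_i$) can contain $S_i$ unless $S_i = \{s_i\}$, in which case $S_i \subsetneq \{s_i, s_j\}$ gives a chain of length $2$, not longer.
\item Types (4) and (1) (singletons and the empty set) contribute at most one step each at the bottom of a chain.
\end{itemize}
Putting this together, every strict chain among non-empty elements has the form $\{s\} \subsetneq M$ where $M$ is a type (2) or type (3) element, so has at most $2$ elements; adjoining $\varnothing$ gives a chain of length at most $3$.

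Finally, I would conclude that the derived complex of $A\mathcal{S}^\ell$ has simplices of dimension at most $2$, so $\hat\Phi$ is $2$-dimensional. I do not expect any real obstacle here: once Lemma \ref{lemma:cosetinclusion} reduces the problem to $\mathcal{S}^\ell$, the argument is a routine verification using disjointness of the $S_i$'s. The only point requiring mild care is the coincidence $S_i = \{s\}$ when $|S_i|=1$, which just collapses two possible elements to one without lengthening any chain.
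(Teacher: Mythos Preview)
Your proposal is correct and follows essentially the same approach as the paper: both arguments invoke Lemma~\ref{lemma:cosetinclusion} to pass from chains in $A\mathcal{S}^\ell$ to strict chains in $\mathcal{S}^\ell$, and then observe that the elements of cardinality at least $2$ (namely the $S_i$ and the edges $\{s_i,s_j\}$) are maximal. The paper's version is slightly more compressed---it assumes a $3$-simplex, notes $|T_2|\geq 2$, and derives a contradiction directly---whereas you spell out the maximality case-by-case and address the $|S_i|=1$ coincidence explicitly; but there is no substantive difference.
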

\begin{proof}
    Suppose we have a 3-simplex $[ \alpha_0 A_{T_0}, \alpha_1 A_{T_1}, \alpha_2 A_{T_2}, \alpha_3 A_{T_3} ]$ of $\hat \Phi$. By Lemma \ref{lemma:cosetinclusion}, we then have a chain $T_0 < T_1 < T_2 < T_3$. In particular, $|T_2| \geq 2$. The only sets of $\mathcal S^\ell$ with cardinality at least 2 are either $S_i$ for some $i$ or an edge $\{s_i,s_j\}$. But in either case, there is no element of $\mathcal S^\ell$ containing $T_2$, a contradiction.
\end{proof}

As a consequence of the proof of the Lemma, there are only two kinds of top-dimensional simplices of $\hat \Phi$: the first is $[\alpha_0 1, \alpha_1 A_{\{s\}}, \alpha_2 A_i]$ for a vertex $s \in S_i$ of an edge between $\Gamma_i$ and some $\Gamma_j$, and the second is $[\alpha_0 1, \alpha_1 A_{\{s_i\}}, \alpha_2 A_{\{s_i,s_j\}}]$ for $\{s_i,s_j\}$ an edge between $\Gamma_i$ and $\Gamma_j$.

We now put a metric on the two kinds of 2-simplices of $\hat \Phi$. First, consider $[\alpha_0 1, \alpha_1 A_{\{s\}}, \alpha_2 A_i]$. We give this simplex the metric of a Euclidean isosceles right triangle with right angle at $\alpha_1 A_{\{s\}}$ and whose legs have length 1. Pictorially, we have
\begin{center}
\def\sidelength{4}
\begin{tikzpicture}[decoration={
    markings,
    mark=at position 0.5 with {\arrow{stealth}}}
    ]
    
    \coordinate (1) at (0,0);
    \coordinate (A1) at (\sidelength,\sidelength);
    \coordinate (As) at (\sidelength,0);

    \draw[postaction={decorate}] (1) -- (As);
    \draw[postaction={decorate}] (1) -- (A1);
    \draw[postaction={decorate}] (As) -- (A1);

    \draw[fill] (1) circle [radius=0.05];
    \node [below left] at (1) {$\alpha_0 1$};
    \node [above right] at (0.4,-0.05) {$\frac{\pi}{4}$};

    \draw[fill] (A1) circle [radius=0.05];
    \node [above right] at (A1) {$\alpha_2A_{i}$};
    \node [below left] at (\sidelength+0.05, \sidelength-0.25) {$\frac{\pi}{4}$};

    \draw[fill] (As) circle [radius=0.05];
    \node [below right] at (As) {$\alpha_1A_{\{s\}}$};
    \node [above left] at (As) {$\frac{\pi}{2}$};
    
    \node[below] at (\sidelength/2, 0) {$1$};
    \node[right] at (\sidelength,\sidelength/2-0.05) {$1$};
    \node[above left] at (\sidelength/2,\sidelength/2) {$\sqrt{2}$};
\end{tikzpicture}
\end{center}
The arrows here denote the inclusion of the relevant groups.

Now consider a simplex of the form $\Delta = [\alpha_0 1, \alpha_1 A_{\{s_i\}}, \alpha_2 A_{\{s_i,s_j\}}]$ for $e = \{s_i,s_j\}$ an edge between $\Gamma_i$ and $\Gamma_j$.

\subsubsection{Case 1: A disjoint edge}

Suppose that $e$ is disjoint from all other edges between any $\Gamma_k$ and $\Gamma_\ell$. We then put a similar metric on $\Delta$ as in the previous case; namely,
\begin{center}
\def\sidelength{4}
\begin{tikzpicture}[decoration={
    markings,
    mark=at position 0.5 with {\arrow{stealth}}}
    ]
    
    \coordinate (1) at (0,0);
    \coordinate (A1) at (\sidelength,\sidelength);
    \coordinate (As) at (\sidelength,0);

    \draw[postaction={decorate}] (1) -- (As);
    \draw[postaction={decorate}] (1) -- (A1);
    \draw[postaction={decorate}] (As) -- (A1);

    \draw[fill] (1) circle [radius=0.05];
    \node [below left] at (1) {$\alpha_0 1$};
    \node [above right] at (0.4,-0.05) {$\frac{\pi}{4}$};

    \draw[fill] (A1) circle [radius=0.05];
    \node [above right] at (A1) {$\alpha_2A_{\{s_i,s_j\}}$};
    \node [below left] at (\sidelength+0.05, \sidelength-0.25) {$\frac{\pi}{4}$};

    \draw[fill] (As) circle [radius=0.05];
    \node [below right] at (As) {$\alpha_1A_{\{s_i\}}$};
    \node [above left] at (As) {$\frac{\pi}{2}$};
    
    \node[below] at (\sidelength/2, 0) {$1$};
    \node[right] at (\sidelength,\sidelength/2-0.05) {$1$};
    \node[above left] at (\sidelength/2,\sidelength/2) {$\sqrt{2}$};
\end{tikzpicture}
\end{center}

\subsubsection{Case 2: A non-disjoint edge} Now suppose that $e$ shares a vertex with some other edge between $\Gamma_i$ and $\Gamma_j$. Then we still put the metric of a Euclidean right triangle on $\Delta$, but it will no longer be isosceles. Specifically, the metric we put on $\Delta$ still assigns a right angle to the vertex $\alpha_1 A_{\{s_i\}}$, but now places an angle of $3\pi/8$ to $\alpha_0 1$ and an angle of $\pi/8$ to $\alpha_2 A_{\{s_i,s_j\}}$. Moreover, importantly, the 1-simplex $[\alpha_01, \alpha_1 A_{\{s_i\}}]$ is given length 1. The diagram for this case is
\begin{center}
\def\sidelength{3}
\begin{tikzpicture}[decoration={
    markings,
    mark=at position 0.5 with {\arrow{stealth}}}
    ]
    
    \coordinate (1) at (1,0);
    \coordinate (A1) at (\sidelength,\sidelength+1);
    \coordinate (As) at (\sidelength,0);

    \draw[postaction={decorate}] (1) -- (As);
    \draw[postaction={decorate}] (1) -- (A1);
    \draw[postaction={decorate}] (As) -- (A1);

    \draw[fill] (1) circle [radius=0.05];
    \node [below left] at (1) {$\alpha_0 1$};
    \node [above right] at (1.2,-0.025) {$\frac{3\pi}{8}$};

    \draw[fill] (A1) circle [radius=0.05];
    \node [above right] at (A1) {$\alpha_2A_{\{s_i,s_j\}}$};
    \node [below left] at (\sidelength+0.05, \sidelength+0.35) {$\frac{\pi}{8}$};

    \draw[fill] (As) circle [radius=0.05];
    \node [below right] at (As) {$\alpha_1A_{\{s_i\}}$};
    \node [above left] at (As) {$\frac{\pi}{2}$};
    
    \node[below] at (2,0) {$1$};
\end{tikzpicture}
\end{center}

In order to show this properly defines a piecewise Euclidean metric on $\hat \Phi$, we examine the gluings between adjacent simplices. We begin with a simplex of the form $\Delta = [\alpha_0 1, \alpha_1 A_{\{s\}}, \alpha_2 A_i]$. The only type of simplex $\Delta$ can be adjacent to which is not of the same type is one of the form $\Delta' = [\alpha_0 1, \alpha_1 A_{\{s\}}, \alpha_2' A_{\{s,t\}}]$ with $\{s,t\}$ an edge between $\Gamma_i$ and $\Gamma_j$, and $t$ a vertex of $\Gamma_j$. These simplices are glued only along the edge $[\alpha_01, \alpha_1 A_{\{s\}}]$, and within both simplices we have assigned this edge a length of 1.

Now consider $\Delta = [\alpha_0 1, \alpha_1 A_{\{s_i\}}, \alpha_2 A_{\{s_i,s_j\}}]$ for $\{s_i,s_j\}$ an edge between $\Gamma_i$ and $\Gamma_j$. The case where $\Delta$ is adjacent to a simplex of the form $[\alpha_0 1, \alpha_1 A_{\{s\}}, \alpha_2 A_i]$ was covered above. So, consider an adjacent simplex of the form $[\alpha_0' 1, \alpha_1 A_{\{s_i\}}, \alpha_2 A_{\{s_i,s_j\}}]$ or $[\alpha_0 1, \alpha_1' A_{\{s_i'\}}, \alpha_2 A_{\{s_i,s_j\}}]$. In either case, the metric put on the simplices is the same as that of $\Delta$ as this metric only depended on the edge $\{s_i,s_j\}$, so there is no issue with the gluing. 

It remains to check the simplices of the form $\Delta' = [\alpha_0 1, \alpha_1 A_{\{s_i\}}, \alpha_2 A_{\{s_i',s_k\}}]$ for an edge $\{s_i',s_j'\}$ and $s_k \in \Gamma_k$ for some $k \not= i$. 
By Lemma \ref{lemma:cosetinclusion}, since $\alpha_1 A_{\{s_i\}} \subseteq \alpha_2 A_{\{s_i',s_k\}}$, we have $\{s_i\} \subseteq \{s_i',s_k\}$, and since $s_k \in \Gamma_k$ we must have $s_i' = s_i$. Thus if this is to be a simplex distinct from $\Delta$, we must have $s_k \not= s_j$, so $\{s_i,s_k\}$ and $\{s_i,s_j\}$ are both edges which are not distinct. Thus the metrics on $\Delta$ and $\Delta'$ are the same, so they may be glued as required.

\section{Links}

The purpose of this section is to show the following.

\begin{prop} \label{prop:hatphicat0}
    The complex $\hat \Phi$ (with the above metric) is $\mathrm{CAT}(0)$ (hence contractible). 
\end{prop}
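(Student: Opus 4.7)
The plan is to combine the simple connectivity of Lemma \ref{lemma:simcon} with a local nonpositive curvature check. By the Cartan--Hadamard theorem for piecewise Euclidean complexes, it suffices to show $\hat\Phi$ is locally $\mathrm{CAT}(0)$, and since $\hat\Phi$ is $2$-dimensional with finitely many cell isometry classes, Gromov's link condition reduces this to showing that the link of each vertex, viewed as a metric graph, contains no injective loop of length less than $2\pi$.

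Using the simplicial $A$-action I restrict to one representative per vertex type of the fundamental domain $X$: $[\varnothing]$, $[\{s\}]$, $[\{s_i,s_j\}]$, and $[S_i]$. Lemma \ref{lemma:cosetinclusion} determines which cosets are comparable to a given coset, so each link is naturally a bipartite graph whose two parts are the cosets strictly below and strictly above the basepoint in the poset $A\mathcal S^\ell$, with edge lengths given by the angles read off from the Euclidean triangles. The two easy cases are the link at $\alpha A_{\{s\}}$, which is complete bipartite with all edges of length $\pi/2$ (so girth $4$, minimum cycle length $2\pi$), and the link at $\alpha\cdot 1$, a finite graph isomorphic to the link of $\varnothing$ in $X$. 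For the latter a direct case check shows no $4$-cycle exists (no cross-edge can join two vertices of a single $\Gamma_i$), and every $6$-cycle must traverse two cross-edges sharing a common vertex; those four link-edges have length $3\pi/8$ under (REL$'$), giving total at least $4\cdot 3\pi/8 + 2\cdot \pi/4 = 2\pi$, while all longer cycles have length at least $8\cdot \pi/4 = 2\pi$.

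The more delicate cases are the links at $\alpha A_{\{s_i,s_j\}}$ and $\alpha A_{S_i}$, where a cycle translates into a nontrivial word identity $s_{k_1}^{n_1}\cdots s_{k_r}^{n_r} = 1$ in the relevant parabolic subgroup. The key technical tool I would use is van der Lek's intersection identity $A_{T_1}\cap A_{T_2} = A_{T_1\cap T_2}$ (cf.\ the proof of Lemma \ref{lemma:prodgen}), applied iteratively to force some $s^n$ to lie in $A_{\{s\}}\cap A_T = A_\varnothing = 1$, thereby ruling out short cycles. In the dihedral link at $A_{\{s_i,s_j\}}$, a pigeonhole argument on the two available generators combined with this intersection identity pins the girth down to exactly $4m$ edges, where $m$ is the label; combined with the angle $\pi/4$ (disjoint) or $\pi/8$ (non-disjoint), the minimum cycle length becomes $m\pi$ or $m\pi/2$, both at least $2\pi$ precisely under (REL$'$) (with equality when $m=2$ disjoint or $m=4$ non-disjoint). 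The link at $A_{S_i}$ is handled analogously: any putative short cycle reduces via the intersection identity to the two-generator case, where commuting generators produce $8$-cycles of length exactly $2\pi$ and no shorter loops exist.

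The main obstacle is this final girth computation for the two ``top'' links, which are infinite bipartite graphs. One must carefully translate combinatorial cycles into word identities in Artin subgroups and eliminate the short ones, using the parabolic intersection property together with the fact that dihedral Artin groups admit no short relations beyond the braid relation. The (REL$'$) hypothesis is used exactly by prescribing the sharper $\pi/8$ angle at non-disjoint cross-edges, which just compensates for the minimal permitted label $m=4$ so that the $\mathrm{CAT}(1)$ inequality is satisfied with equality in the boundary case.
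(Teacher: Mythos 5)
Your proposal follows essentially the same route as the paper: simple connectivity (Lemma \ref{lemma:simcon}) plus the two-dimensional link condition, with the same case-by-case girth and angle computations at the four vertex types and the same extremal cases (exactly $2\pi$ for the mixed $6$-cycles in the link of $[\alpha 1]$, and for $m=2$ disjoint versus $m=4$ non-disjoint edges). The one step to state precisely is the girth bound of $4m(s_i,s_j)$ in the dihedral links: the parabolic intersection identity alone does not yield it, and the paper gets it by citing Appel's lemma that a syllable-reduced word representing the identity in $A_{\{s_i,s_j\}}$ has syllable length at least $2m(s_i,s_j)$ --- which is exactly the ``no short relations'' fact you invoke, so your argument is correct once that citation is made explicit.
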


To do this, we compute the link at each relevant vertex of $\hat \Phi$ and show that the link condition is satisfied.
Let us briefly recall the relevant definitions. (For more details, see \cite{bridson2013metric}.)

\begin{defn}[Link of a vertex]
    Let $K$ be a polyhedral complex and $v$ a vertex of $K$. 
    Then the link of $v$ in $K$, denoted $\mathrm{lk}_{K}(v)$, is the $\varepsilon$-sphere of $K$ centered at $v$. We give the link a cell structure coming from the intersection of the sphere with the cell structure of $K$. The link is endowed with a natural spherical metric inherited from the $\varepsilon$-sphere. 
\end{defn}

In the case of the geometric realization of an abstract simplicial complex (such as $\hat \Phi$), we can give an explicit description of the link of a vertex using the underlying set.
Let $[\alpha A_T]$ be a vertex of $\hat \Phi$ (so $\alpha A_T \in A \mathcal S^\ell$). 
Then the vertex set of $\mathrm{lk}_{\hat \Phi}([\alpha A_T])$ is
\[
    \{\, \alpha' A_{T'} \,:\, \alpha' A_{T'} \subseteq \alpha A_T\,\} \cup \{\, \alpha'' A_{T''} \,:\, \alpha''A_{T''} \supseteq \alpha A_T \,\}.
\]
But by Lemma \ref{lemma:cosetinclusion}, this is the same as the set
\[
    \{\,\alpha' A_{T'} \,:\, \alpha' A_{T'} \subseteq \alpha A_T \,\} \cup \{\, \alpha A_{T''} \,:\, {T''} \supseteq T \,\}.
\]
A collection of vertices $\alpha_0 A_{T_0} < \dots < \alpha_j A_{T_j} <\alpha A_{T'_0} < \dots \alpha A_{T'_k}$ span a $(j+k)$-simplex of $\mathrm{lk}_{\hat \Phi}([\alpha A_T])$ if and only if 
\[ [ \alpha_0 A_{T_0} < \dots < \alpha_j A_{T_j} < \alpha A_T <  \alpha A_{T'_0} < \dots \alpha A_{T'_k} ]\]
is a $(j+k+1)$-simplex of $\hat \Phi$.
In the case of $\hat \Phi$, we can say slightly more than this. Our complex $\hat\Phi$ is 2-dimensional, so the link of any vertex is 1-dimensional. Moreover, the link of a simplicial complex is itself a simplicial complex, so the link here is always a simplicial graph. 

We can also explicitly describe the spherical metric on each link in $\hat \Phi$. If $[\alpha A_T]$ is a vertex of $\hat \Phi$ and $e = [\alpha_0 A_{T_0}, \alpha_1 A_{T_1}]$ is an edge of $\mathrm{lk}_{\hat\Phi}([\alpha A_T])$, then the length of $e$ is the angle assigned above to the vertex corresponding to $\alpha A_T$ in the simplex of $\hat \Phi$ spanned by the vertices $\alpha A_T$, $\alpha_0 A_{T_0}$, $\alpha_1 A_{T_1}$.

\begin{defn}
    We say that a polyhedral complex $K$ satisfies the \emph{link condition} if for each vertex $v$ of $K$, the link $\mathrm{lk}_K(v)$ is a $\mathrm{CAT}(1)$ space (under the induced spherical metric).
\end{defn}

To show $\hat \Phi$ is $\mathrm{CAT}(0)$, we make use of the following criterion, proven in \cite{bridson2013metric}.

\begin{lemma} \label{thm:catcrit}
    If $K$ is a Euclidean polyhedral complex (meaning each cell of $K$ has the metric of a Euclidean polytope) and $K$ is simply connected, then $K$ is $\mathrm{CAT}(0)$ if and only if it satisfies the link condition.
\end{lemma}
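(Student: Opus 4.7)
The plan is to deduce this from two classical pillars of the $\mathrm{CAT}(0)$ theory of polyhedral complexes, both developed in Bridson--Haefliger. First I would verify that $K$ carries a complete geodesic metric. For a piecewise Euclidean complex with only finitely many isometry types of cells this follows from a theorem of Bridson, and in our intended application $\hat\Phi$ has only a small handful of distinct cell shapes, so completeness and geodesicity are immediate. Without this, statements about $\mathrm{CAT}(0)$ cannot even be formulated cleanly.

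The heart of the argument is Gromov's link condition: local $\mathrm{CAT}(0)$ at a point $p \in K$ is equivalent to $\mathrm{lk}_K(p)$ being $\mathrm{CAT}(1)$ in its induced spherical metric. The justification has two inputs. First, a small enough neighborhood of $p$ is isometric to a truncated Euclidean cone on $\mathrm{lk}_K(p)$, a local conicality statement that is a direct verification from the piecewise Euclidean structure. Second, one applies Berestovski's theorem: the Euclidean cone on a metric space is $\mathrm{CAT}(0)$ if and only if the base is $\mathrm{CAT}(1)$. At an interior point of a top-dimensional cell the condition is automatic; at an interior point of a lower-dimensional cell the link splits off a spherical suspension factor, so the metric cone splits off a Euclidean factor and the problem reduces to the vertex case.

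Given this equivalence the two directions of the lemma follow quickly. For the forward direction, $\mathrm{CAT}(0)$ implies locally $\mathrm{CAT}(0)$ tautologically, hence the link condition holds. For the reverse direction I would invoke the Cartan--Hadamard theorem in its metric form: a complete, simply connected, locally $\mathrm{CAT}(0)$ geodesic space is globally $\mathrm{CAT}(0)$. Both simple connectivity and local $\mathrm{CAT}(0)$ are in hand, so the conclusion is immediate.

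The main obstacle is really absorbed into the cited literature rather than being original here; the substantive inputs are Berestovski's conical characterization and the metric Cartan--Hadamard theorem, each of which requires careful development of comparison geometry. The one step that deserves explicit verification in any given application is the local conicality of neighborhoods of vertices, but for finite-dimensional piecewise Euclidean complexes this is a standard and routine check.
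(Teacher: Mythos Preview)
The paper does not supply its own proof of this lemma; it simply states the result and cites Bridson--Haefliger. Your proposal is a correct outline of the standard argument one finds there (local conicality, Berestovski's cone theorem, and the metric Cartan--Hadamard theorem), so you are in complete agreement with the paper's approach---you have merely unpacked the citation.
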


Since our complex $\hat \Phi$ is 2-dimensional, to verify our links are $\mathrm{CAT}(1)$, we can use the following equivalent condition, also proven in \cite{bridson2013metric}.

\begin{lemma} \label{lemma:twodlink}
    A 2-dimensional Euclidean simplicial complex $K$ satisfies the link condition if and only if for each vertex $v$ of $K$, every embedded closed loop in $\mathrm{lk}_K(v)$ has length at least $2\pi$.
\end{lemma}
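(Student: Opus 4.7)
The plan is to reduce the statement to a purely 1-dimensional claim about spherical metric graphs and then verify it directly.

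Since $K$ is 2-dimensional, the link $\mathrm{lk}_K(v)$ of any vertex $v$ is a 1-dimensional spherical simplicial complex: a metric graph whose edges correspond to the 2-simplices of $K$ meeting $v$ and whose lengths equal the corresponding angles at $v$. The lemma therefore reduces to showing that such a graph $L$ is $\mathrm{CAT}(1)$ if and only if every embedded closed loop in $L$ has length at least $2\pi$.

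For the forward direction, I would argue contrapositively. Given an embedded loop $\gamma \subseteq L$ of length $\ell < 2\pi$, the goal is to produce a pair of points joined by two distinct geodesics of length less than $\pi$, contradicting the uniqueness of short geodesics in any $\mathrm{CAT}(1)$ space. To do this, I would first replace $\gamma$ by a shortest embedded loop passing through one of its points (obtained by a combinatorial minimization), which is necessarily locally length-minimizing and hence a closed geodesic. Taking any two antipodal points on this shorter closed geodesic produces the required pair of short geodesics.

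For the reverse direction, the main task is to upgrade ``no short embedded loop'' to the full $\mathrm{CAT}(1)$ inequality. First I would establish uniqueness of geodesics between points at distance less than $\pi$: two distinct such geodesics would concatenate into a loop of length less than $2\pi$ that, after extracting an embedded sub-loop, contradicts the hypothesis. With uniqueness in hand, each open $\pi$-ball in $L$ is a metric tree, so any geodesic triangle of perimeter less than $2\pi$ embeds into a tree and is therefore a tripod; tripods automatically satisfy the $\mathrm{CAT}(1)$ four-point comparison, giving the result.

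The main obstacle, and presumably why the author simply cites Bridson--Haefliger, is the technical care required in the ``extract a minimal embedded loop'' step and in verifying that the extracted loop is genuinely a closed geodesic (rather than a short loop with backtracking or hidden shortcuts through other parts of $L$). Once these points are handled carefully, the reduction to tree-like behavior on $\pi$-balls makes both directions essentially formal.
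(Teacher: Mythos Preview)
The paper does not give its own proof of this lemma; it simply cites Bridson--Haefliger. Your sketch is the standard argument found there and is essentially correct.

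One small inaccuracy: the claim that every open $\pi$-ball in $L$ is a metric tree is not literally true, since such a ball may well contain an embedded cycle of length $\geq 2\pi$. What you actually need, and what your argument really uses, is the weaker statement that any geodesic triangle of perimeter less than $2\pi$ is a tripod. This follows directly from the uniqueness of short geodesics you established: the union of the three sides is a connected subgraph of $L$ whose total length is less than $2\pi$, so it can contain no embedded cycle at all and is therefore a tree, forcing the triangle to be a tripod. With that correction your reverse direction goes through cleanly.
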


We now turn to examining the links of our complex in detail. Since each vertex of $\hat \Phi$ is a translate of one of the cosets $A_T$, it suffices to just compute the link at $A_T$ for $T \in \mathcal S^\ell$.

\subsection{Case 1: \texorpdfstring{$T = S_i$}{T = S\_i}}
Let us first examine the link of $A_i$ for fixed $i$. 
The vertex set of this link can be decomposed as
\begin{align*}
    \{\, \alpha1 \,:\, \alpha \in A_i \,\} \qquad \text{ and } \qquad \{\,\alpha A_{\{s\}} \,:\, \alpha \in A_i, s \in S_i \,\}.
\end{align*}
It is easily seen that there is no edge between any two vertices which are in the same set, meaning the link is a bipartite graph. 
By definition, we can only have an edge when $\alpha1 \subseteq \alpha'A_{\{s\}}$, or in other words, when $\alpha \in \alpha'A_{\{s\}}$.

To show that the shortest embedded closed loop in $A_i$ has length at least $2\pi$, we claim that any embedded closed loop in $\mathrm{lk}_{\hat \Phi}(A_i)$ must have at least 8 edges. 
Since the link is a bipartite graph, we know the edge length of any cycle is even and at least 4. So, we only need to verify that there are no cycles of edge length 4 or 6. 

Suppose we have a loop with 4 edges. Then by our discussion regarding the possible edges in the link, this loop must have the form
{\setlength\mathsurround{0pt}
\begin{center}
     \begin{tikzcd}[column sep=small]
             &  \alpha_1 1 \dlar \arrow[dr]   &               \\
        \alpha' A_{\{s'\}}  &                            & \alpha'' A_{\{s''\}} \\
             & \alpha_2 1 \arrow[ul] \arrow[ur]   &
    \end{tikzcd}
\end{center} 
}
(The arrows correspond to inclusions; the paths we consider are not directed.)
This gives us equations of the form
\begin{align*}
\alpha'(s')^{k_1} &= \alpha_1 = \alpha''(s'')^{j_1} \\
\alpha'(s')^{k_2} &= \alpha_2 = \alpha''(s'')^{j_2}.
\end{align*}
Or rewriting, 
\begin{align*}
    (s'')^{j_1}(s')^{-k_1} &= ( \alpha'')^{-1}\alpha' = (s'')^{j_2}(s')^{-k_2},
\end{align*}
implying
\begin{align*}
    (s'')^{j_1-j_2} &= (s')^{k_1-k_2}.
\end{align*}
Since we're assuming the loop is embedded, $s' \not= s''$ (otherwise, two cosets of the same subgroup $A_{s'} = A_{s''}$ would intersect non-trivially, and thus be the same), and $\alpha_1 \not= \alpha_2$, so $k_1\not=k_2$ and $j_1\not= j_2$. However, these are distinct generators, so this cannot happen by Lemma \ref{lemma:prodgen}. Thus this loop is not embedded. 

Now suppose we have a loop with 6 edges. This loop has the form
{\setlength\mathsurround{0pt}
\begin{center}
     \begin{tikzcd}[column sep=tiny, row sep=small]
          &   &  \beta_1 1 \arrow[dl] \arrow[dr]   &               \\
        & \alpha_1 A_{\{s_1\}}  &                            & \alpha_3 A_{\{s_3\}} \\
        \beta_2 1 \arrow[ur] \arrow[rr]   &  & \alpha_2 A_{\{s_2\}}  & & \beta_3 1 \arrow[ul] \arrow[ll]
     \end{tikzcd}
\end{center}
}
Since the loop is embedded, each $\beta_i$ is distinct and at most one of the $\beta_i$ can be the identity, so assume $\beta_1 \not= 1$ and $\beta_2 \not= 1$.
Then since $\beta_1 \not= 1$, we must have that $s_1 \not = s_3$ (as before, if we did have $s_1 = s_3$, then the cosets $\alpha_1 A_{\{s_1\}}$ and $\alpha_3 A_{\{s_3\}}$ would be cosets of the same subgroup $A_{\{s_1\}} = A_{\{s_3\}}$ which intersect non-trivially, thus would be the same coset). Similarly, $s_1 \not= s_2$. 

From our diagram, we see that we have equations
\begin{align*}
     \alpha_1 s_1^{k_1}     &= \beta_1 = \alpha_3 s_3^{k_3}    \\
     \alpha_2 s_2^{j_2}     &= \beta_2 = \alpha_1 s_1^{j_1}    \\
     \alpha_3 s_3^{\ell_3}  &= \beta_3 = \alpha_2 s_2^{\ell_2}  .
\end{align*}
for some $k_i,j_i,\ell_i \in \mathbb{Z}$. 
Then we see that
\begin{align*}
    s_1^{j_1 - k_1} 
        &= s_1^{-k_1}s_1^{j_1} \\
        &= (\alpha_1^{-1}\beta_1)^{-1}(\alpha_1^{-1}\beta_2) \\
        &= \beta_1^{-1}\beta_2,
\end{align*}
and similarly,
\[
    s_2^{\ell_2 - j_2} = \beta_2^{-1} \beta_3, \qquad s_3^{k_3 - \ell_3} = \beta_3^{-1} \beta_1
\]
Note that since the $\beta_i$ are distinct, none of these exponents are zero.
But,
\begin{align*}
    s_1^{j_1 - k_1} &= \beta_1^{-1}\beta_2\\
        &= \beta_1^{-1}(\beta_3\beta_3^{-1})\beta_2 \\
        &= (\beta_3^{-1}\beta_1)^{-1}(\beta_2^{-1}\beta_3)^{-1} \\
        &= s_3^{\ell_3 - k_3}  s_2^{j_2 - \ell_2}  .
\end{align*}
This means $s_1^{j_1 - k_1} \in A_{\{s_2,s_3\}}$, and so $A_{\{s_1\}} \cap A_{\{s_2,s_3\}} \not= 1$ since $j_1 - k_1 \not= 0$. But then by \cite{vdl1983}, this would mean $\{s_1\} \cap \{s_2, s_3\} \not= \varnothing$, a contradiction. 
Thus this loop cannot be embedded.

Therefore, each embedded loop in $\mathrm{lk}_{\hat\Phi}(A_i)$ has at least 8 edges.
The spherical metric on the link assigns each of these edges a length of $\pi/4$, so the shortest possible length of an embedded loop is $2\pi$.

\subsection{Case 2: \texorpdfstring{$T = \{s\}$}{T = \{s\}}}

Now we look at the link of $A_{\{s\}}$ with $s \in \mathrm{Vert}(\Gamma_i)$ a vertex of an edge between $\Gamma_i$ and $\Gamma_j$. In this case the link is again a bipartite graph: 
the vertices can be divided into the sets
\begin{align*}
    &\{\, \alpha 1 \,:\, \alpha \in A_{\{s\}} \,\}, \qquad \text{and} \\ & \{A_i\} \cup \{\, A_{\{s, s_k\}} \,:\, \{s, s_k\} \text{ is an edge between $\Gamma_i$ and $\Gamma_k$, $k \not= i$} \,\}.
\end{align*}
So, every embedded loop has at least 4 edges.
The spherical metric on the link assigns a length of $\pi/2$ to each of these edges, implying the length of every embedded loop is at least $2\pi$.

\subsection{Case 3: \texorpdfstring{$T = \{s_i,s_j\}$}{T = \{s\_i,s\_j\}}}

The link of $A_T$ for $T = \{s_i,s_j\}$, $s_i \in S_i$, $i \not= j$, is slightly different, as there are two cases to consider. However, in both cases the minimal number of edges in an embedded loop are the same.

\begin{lemma}
    If $T = \{s_i,s_j\}$ is an edge between $\Gamma_i$ and $\Gamma_j$ for $i\not=j$, then each embedded loop in $\mathrm{lk}_{\hat\Phi}([A_T])$ has at least $4m(s_i,s_j)$ edges.
\end{lemma}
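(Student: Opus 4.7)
The plan is to identify $\mathrm{lk}_{\hat\Phi}([A_T])$ combinatorially as a bipartite graph whose cycles correspond to alternating relations in the dihedral Artin subgroup $A_T = A_{\{s_i, s_j\}}$ of type $I_2(m)$, where $m = m(s_i, s_j)$, and then bound the length of the shortest such relation.

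First, I would apply Lemma~\ref{lemma:cosetinclusion} to see that the cosets $\alpha A_{T'} \subsetneq A_T$ correspond precisely to $T' \in \{\varnothing, \{s_i\}, \{s_j\}\}$, and that no element of $\mathcal{S}^\ell$ properly contains $T$ (since $T$ spans two distinct $\Gamma_i, \Gamma_j$ while the $S_k$ are pairwise disjoint). Hence the link is a bipartite graph on the vertex classes $\{\alpha \cdot 1 : \alpha \in A_T\}$ and $\{\beta A_{\{t\}} : \beta \in A_T,\, t \in T\}$, with an edge $[\alpha \cdot 1,\, \beta A_{\{t\}}]$ whenever $\alpha \in \beta A_{\{t\}}$.

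Next, given an embedded loop of length $2L$ with successive vertices $\alpha_1 \cdot 1,\, \beta_1 A_{\{t_1\}},\, \alpha_2 \cdot 1,\, \ldots,\, \beta_L A_{\{t_L\}}$ closing via $\alpha_{L+1} = \alpha_1$, the incidence conditions force $\alpha_{k+1} = \alpha_k t_k^{n_k}$ for some $n_k \in \mathbb{Z} \setminus \{0\}$, with the $n_k$ nonzero because the $\alpha_k$ are distinct. If $t_k = t_{k+1}$ for some $k$, then both $\beta_k A_{\{t_k\}}$ and $\beta_{k+1} A_{\{t_k\}}$ would contain $\alpha_{k+1}$ and hence coincide, contradicting embeddedness. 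So the $t_k$ alternate between $s_i$ and $s_j$, and closing up yields
\[
    t_1^{n_1}\, t_2^{n_2} \cdots t_L^{n_L} = 1 \quad \text{in } A_T.
\]

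The lemma therefore reduces to showing that any alternating word in $\{s_i, s_j\}$ with nonzero integer exponents that equals $1$ in $A_T$ has syllable length at least $2m$; this yields $2L \geq 4m$. The bound is sharp, as witnessed by the braid relation $\mathrm{prod}(s_i, s_j; m)\, \mathrm{prod}(s_j, s_i; m)^{-1} = 1$, which is alternating of length $2m$. I expect the main obstacle to be the case when some $n_k$ are even: the naive projection $A_T \to W_T$ collapses such syllables and cannot by itself rule out shorter alternating relations, so a genuinely Artin-level argument is required. To finish, I would exploit the Garside structure of the dihedral Artin group, writing each element in its normal form $\Delta^p u$ where $\Delta = \mathrm{prod}(s_i, s_j; m)$ is central and $u$ is a positive simple element, and tracking an arbitrary alternating factorization against this normal form. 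Alternatively, one may appeal to the Charney-Davis modified Deligne complex of $A_T$ \cite{charney1995k}, whose link at $[A_T]$ is precisely our bipartite graph and is known to have girth $4m$ as a consequence of its CAT(0) structure.
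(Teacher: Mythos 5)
Your reduction is the same as the paper's: you identify $\mathrm{lk}_{\hat\Phi}([A_T])$ as the bipartite coset graph of the dihedral Artin group $A_T$ (using Lemma~\ref{lemma:cosetinclusion} to see nothing in $\mathcal S^\ell$ lies above $T$), observe that an embedded loop with $2L$ edges yields an alternating relation $t_1^{n_1}\cdots t_L^{n_L}=1$ with all $n_k\neq 0$ and $t_k\neq t_{k+1}$, and reduce the lemma to the statement that a nontrivial alternating relation in $A_{\{s_i,s_j\}}$ has syllable length at least $2m(s_i,s_j)$. That is exactly how the paper argues; it closes the final step by citing \cite[Lemma 6]{appel1983artin}, which is precisely the syllable-length bound you isolate (and you are right that the projection to $W_T$ cannot deliver it). Where you differ is in how you propose to justify that bound, and neither of your two finishes is solid as written. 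The Garside sketch is only a plan and contains inaccuracies: $\Delta=\mathrm{prod}(s_i,s_j;m)$ is central only when $m$ is even (for odd $m$ only $\Delta^2$ is central, as conjugation by $\Delta$ swaps the generators), and the normal form of a general element is $\Delta^p u_1\cdots u_k$ with several simple factors, not $\Delta^p u$; tracking an arbitrary alternating factorization against greedy normal forms is the genuinely hard content here, essentially a reproof of the Appel--Schupp lemma. The appeal to \cite{charney1995k} does point at a correct source---the girth-$4m$ statement for this coset graph is established there in the rank-two analysis---but your stated justification runs backwards: in that setting the girth bound is the input used to verify the link condition and hence the $\mathrm{CAT}(0)$/$\mathrm{CAT}(1)$ structure, not a consequence of it, so as phrased the argument is circular. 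With the dihedral fact sourced properly (either \cite[Lemma 6]{appel1983artin}, as the paper does, or the rank-two girth lemma underlying \cite{charney1995k}), your proof is complete and coincides with the paper's.
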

\begin{proof}
    The link of $A_T$ has vertex set which can be split into
    \begin{align*}
        \{\, \alpha 1 \,:\, \alpha \in A_T \,\} \qquad \text{ and } \qquad \{\,\alpha A_{s_k} \,:\, \alpha \in A_T,\, k = i,j \,\},
    \end{align*}
    on which the link is a bipartite graph. By applying the natural $A_T$ action on the link, we may consider only loops which contain the vertex $1$. Namely, we may consider only loops of the form
    
    {\setlength\mathsurround{0pt}
    \begin{center}
        \begin{tikzcd}[row sep=small]
            \alpha_1 A_{t_1} & \beta_1 \arrow{l} \arrow{r} & \alpha_2 A_{t_2} & \beta_2 \arrow{l} \arrow{d}\\
            1 \arrow{u} \arrow{d} & & & \vdots \\
            \alpha_{n} A_{t_{n}} &\beta_{n-1} \arrow{l} \arrow{r} & \alpha_n A_{t_{n-1}} & \beta_{n-2}  \arrow{l} \arrow{u}
        \end{tikzcd}
    \end{center}
    }
    where each $\alpha_k \in A_T$ and each $t_k \in T$. This loop has $2n$ edges. Moreover, assuming this loop is embedded, this gives rise to a (reduced) word in $s_i$ and $s_j$ of syllable length at least $n$ (see \cite[Section 4]{appel1983artin} for the definition of syllable length) which is equal to the identity in $A_T$. By \cite[Lemma 6]{appel1983artin}, this means $n \geq 2m(s_i,s_j)$. Thus, this loop has at least $4m(s_i,s_j)$ edges.
\end{proof}

Now, we can compute the length of these loops in each given link.

\subsubsection{Case 3a: A disjoint edge}

If $\{s_i,s_j\}$ is disjoint from every other edge between the subgraphs in $\{\Gamma_k\}$, then the spherical metric on the link of $A_T$ implies that the length of each edge here is $\pi/4$. So, the length of any embedded loop is at least $(4m(s_1,s_2))(\pi/4) = \pi m(s_1,s_2)$. Since $m(s_1,s_2) \geq 2$, this loop has length at least $2\pi$, as required.

\subsubsection{Case 3b: A non-disjoint edge}

If this edge is not disjoint from every other edge between the subgraphs in $\{\Gamma_k\}$, the metric we have assigned implies that the length of each edge is $\pi/8$. So, the length of any embedded loop is at least $(4m(s_1,s_2))(\pi/8) = \pi m(s_1,s_2)/2$. But in this case, we have also assumed $m(s_1,s_2) \geq 4$, so the length of this loop is still at least $2\pi$.

\subsection{Case 4: \texorpdfstring{$T = \varnothing$}{T = empty set}}

It remains to check the link of the trivial coset $1$. Note again that this link is bipartite, with a partition of the vertices given by
\begin{align*}
    &\{\, A_{\{s\}} \,:\, s \text{ a vertex of an edge between the subgraphs in } \{\Gamma_k\} \,\}, \qquad \text{ and} \\
    & \{\, A_i \,:\, \text{ each } i\,\} \cup  \{\, A_{\{s_i, s_j\}} \,:\, \{s_i, s_j\} \text{ an edge between $\Gamma_i$ and $\Gamma_j$, $k \not= i$} \,\}.
\end{align*}

We first verify that there are no embedded loops with 4 edges. Suppose we had such an embedded loop, say
{\setlength\mathsurround{0pt}
\begin{center}
     \begin{tikzcd}[column sep=tiny, row sep=small]
             &  A_{\{s\}} \arrow[dl] \arrow[dr]   &               \\
        A_{T_1}  &                            & A_{T_2} \\
             & A_{\{s'\}} \arrow[ul] \arrow[ur]   &
    \end{tikzcd}
\end{center} 
}
Since this loop is embedded, $s \not= s'$. Thus by Lemma \ref{lemma:cosetinclusion}, both $T_1$ and $T_2$ contain $\{s,s'\}$. If $s$ and $s'$ are in the same vertex set $S_i$, then we must have $T_1 = T_2 = S_i$ by our definition of $\mathcal S^\ell$. Similarly, if they are in distinct vertex sets, then both $T_1$ and $T_2$ must exactly be the edge $\{s,s'\}$. In either case, we have a contradiction.

It is entirely possible that we have embedded loops of length 6. Suppose
{\setlength\mathsurround{0pt}
\begin{center}
    \begin{tikzcd}[column sep=tiny, row sep=small]
                          & A_{\{s_1\}} \arrow[dl] \arrow[dr] & \\
        A_{T_1}  &               &  A_{T_3} \\
        A_{\{s_2\}} \arrow[u] \arrow[dr]     &               &  A_{\{s_3\}} \arrow[u] \arrow[dl] \\
                          & A_{T_2} & \\
    \end{tikzcd}
\end{center}
}
is such a loop. If each pair $\{s_i,s_j\}$ is an edge between the family of subgraphs $\{\Gamma_i\}$, 
then the $T_i$ must be the edges
\begin{align*}
    T_1 &= \{s_1,s_2\}, \\
    T_2 &= \{s_2,s_3\}, \\
    T_3 &= \{s_3,s_1\}
\end{align*}
since these are the only sets of $\mathcal S^\ell$ which satisfy the containments implied by the diagram.
But none of these edges are disjoint, so the metric we've put on $\hat \Phi$ assigns the following edge lengths to this path:
{\setlength\mathsurround{0pt}
\begin{center}
    \begin{tikzcd}[column sep=small]
                          & A_{\{s_1\}} \arrow{dl}[swap]{\frac{3\pi}{8}} \arrow{dr}{\frac{3\pi}{8}} & \\[-0.5em]
        A_{T_1}  &               &  A_{T_3} \\
        A_{\{s_2\}}  \arrow{u}{\frac{3\pi}{8}} \arrow{dr}[swap]{\frac{3\pi}{8}}     &               &  A_{\{s_3\}} \arrow{u}[swap]{\frac{3\pi}{8}} \arrow{dl}{\frac{3\pi}{8}} \\[-0.5em]
                          & A_{T_2} & \\
    \end{tikzcd}
\end{center}
}
And thus this loop has length at least $2\pi$. Now suppose two of the vertices $s_i$ are in the same vertex set and the other is in a distinct vertex set. Without loss of generality, we can take $s_1,s_2 \in S_i$ and $s_3 \in S_j$ with $i \not= j$. Then the only set $T_1 \in \mathcal S^\ell$ containing both $s_1$ and $s_2$ is $T_1 = S_i$, and so we now have 
\begin{align*}
    T_1 &= S_i, \\
    T_2 &= \{s_2,s_3\}, \\
    T_3 &= \{s_3,s_1\}.
\end{align*}
The metric on $\hat \Phi$ then assigns the following edge lengths:
{\setlength\mathsurround{0pt}
\begin{center}
    \begin{tikzcd}[column sep=small]
                          & A_{\{s_1\}} \arrow{dl}[swap]{\frac{\pi}{4}} \arrow{dr}{\frac{3\pi}{8}} & \\[-0.5em]
        A_{T_1}  &               &  A_{T_3} \\
        A_{\{s_2\}}  \arrow{u}{\frac{\pi}{4}} \arrow{dr}[swap]{\frac{3\pi}{8}}     &               &  A_{\{s_3\}} \arrow{u}[swap]{\frac{3\pi}{8}} \arrow{dl}{\frac{3\pi}{8}} \\[-0.5em]
                          & A_{T_2} &
    \end{tikzcd}
\end{center}
}
which is still at least $2\pi$.
We note that it is not possible to have $s_1,s_2,s_3 \in S_i$ for any $i$, since then $T_1 = T_2 = T_3 = S_i$, and this loop would not be embedded.

Finally, if we have a loop with 8 edges in this link, then the length of each edge under our metric is at least $\pi/4$, and thus the length of this loop would be at least $2\pi$ as well.

This concludes every possibility for $T$, so it follows that $\hat \Phi$ satisfies the link condition by Lemma \ref{lemma:twodlink}. By Lemma \ref{lemma:simcon}, $\hat \Phi$ is simply connected, so by Lemma \ref{thm:catcrit}, $\hat \Phi$ is $\mathrm{CAT}(0)$, and thus contractible, as desired.

\section{The \texorpdfstring{$K(\pi,1)$}{K(pi,1)} Conjecture}

In this section only, \textbf{we assume that each $A_{\Gamma_i}$ satisfies the $K(\pi,1)$ conjecture}. In addition, we assume that $A_\Gamma$ is not spherical-type. (Since the $K(\pi,1)$ conjecture is known for spherical-type Artin groups, there is no loss of generality in making this assumption.)  

We will use the following by Paris and Godelle in \cite{godelle2012k}.

\begin{defn}
    Let $(A,S)$ be an Artin-Tits system and let $\mathcal S$ be a family of subsets of $S$. Then $\mathcal S$ is \emph{complete and $K(\pi,1)$} if the following are satisfied:
    \begin{enumerate}
        \item If $T \in \mathcal S$ and $T' \subseteq T$, then $T' \in \mathcal S$,
        \item $(A_T, T)$ satisfies the $K(\pi,1)$ conjecture for each $T \in \mathcal S$, and
        \item If $A_T$ is spherical-type, then $T \in \mathcal S$.
    \end{enumerate}
    Then let
    \[ A\mathcal S = \{\, \alpha A_T \,:\, \alpha \in A,\, T \in \mathcal S \,\}, \]
    and let $\Phi(A, \mathcal S)$ denote the geometric realization of the derived complex of $A \mathcal S$.
\end{defn}

The relevant result for us is 

\begin{thm} \label{thm:complhomtype}
    \textbf{\cite[Theorem 3.1]{godelle2012k}} \:
    Let $(A,S)$ be an Artin-Tits system and let $\mathcal S$ be a complete and $K(\pi,1)$ family of subsets of $S$. Then $\Phi(A,\mathcal S)$ has the same homotopy type as the universal cover of $N(W)$.
\end{thm}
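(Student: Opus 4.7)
The plan is to prove the theorem by constructing a single classifying space that sits between $N(W)$ and $\Phi(A,\mathcal S)$, and identifying it with both up to homotopy. The natural candidate comes from the complex-of-groups structure of the $A$-action on $\Phi(A,\mathcal S)$: the stabilizer of a vertex $[\alpha A_T]$ is the conjugate $\alpha A_T \alpha^{-1}$, and the quotient $\Phi(A,\mathcal S)/A$ is the geometric realization of the poset $\mathcal S$ itself. As in the proof of Lemma \ref{lemma:simcon}, this yields a developable complex of groups with local groups the $A_T$ and edge maps the standard parabolic inclusions, whose Borel construction is
\[
    B(A,\mathcal S) = \Phi(A,\mathcal S) \times_A EA.
\]

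The first key step is to recognize $B(A,\mathcal S)$ as a homotopy colimit. Since the local groups along the poset $\mathcal S$ are the $A_T$ and the transition maps are the inclusions $A_T \hookrightarrow A_{T'}$ for $T \subseteq T'$, standard facts about classifying spaces of (developable) complexes of groups give $B(A,\mathcal S) \simeq \mathrm{hocolim}_{T\in\mathcal S}\, BA_T$. By property (2) in the definition of a complete and $K(\pi,1)$ family, each $BA_T$ is homotopy equivalent to $N(W_T)$, so the right-hand side can be rewritten as $\mathrm{hocolim}_{T\in\mathcal S}\, N(W_T)$.

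The second step is to identify this homotopy colimit with $N(W)$. Here one uses the Salvetti-style decomposition of $N(W)$: it admits a CW model whose cells are naturally indexed by the spherical parabolics $T \in \mathcal S^f$, exhibiting $N(W)$ as the (homotopy) colimit of the $N(W_T)$ for $T \in \mathcal S^f$. Properties (1) and (3) of completeness ensure $\mathcal S^f \subseteq \mathcal S$ and that $\mathcal S$ is downward closed, so the sub-diagram over $\mathcal S^f$ is cofinal in the appropriate sense and the remaining terms indexed by $\mathcal S \setminus \mathcal S^f$ glue in as aspherical pieces along cofibrations. A nerve-theorem style argument then shows that enlarging the diagram from $\mathcal S^f$ to $\mathcal S$ does not change the homotopy type of the colimit, yielding $B(A,\mathcal S) \simeq N(W)$.

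Finally, the $A$-action on $\Phi(A,\mathcal S)$ gives a covering $\Phi(A,\mathcal S) \times EA \to B(A,\mathcal S)$ with deck group $A = \pi_1(N(W))$, so this is (up to homotopy) the universal cover of $B(A,\mathcal S) \simeq N(W)$, hence homotopy equivalent to $\widetilde{N(W)}$. Since $\Phi(A,\mathcal S) \times EA \simeq \Phi(A,\mathcal S)$, we conclude $\Phi(A,\mathcal S) \simeq \widetilde{N(W)}$. The main obstacle is the identification in the second step: one must produce a CW model of $N(W)$ whose gluing data is literally isomorphic to the diagram $T \mapsto N(W_T)$ over $\mathcal S^f$, and then check that the extension of this diagram to all of $\mathcal S$ does not alter the homotopy colimit. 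The downward closure of $\mathcal S$ and the asphericity of each $N(W_T)$ are precisely what makes this extension a homotopy equivalence, and getting these compatibilities right is the technical heart of the argument.
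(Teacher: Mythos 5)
This statement is not proved in the paper at all: it is quoted verbatim from Godelle--Paris \cite[Theorem 3.1]{godelle2012k} and used as a black box, so there is no internal proof to compare against. (Godelle and Paris argue, in essence, upstairs: the universal cover of the Salvetti complex is decomposed into pieces indexed by the cosets $\alpha A_T$, $T\in\mathcal S$, each piece being a copy of the universal cover of the Salvetti complex of $A_T$, hence contractible by the $K(\pi,1)$ hypothesis; collapsing these pieces via a nerve-type argument over the coset poset yields $\Phi(A,\mathcal S)$.) Your downstairs route --- Borel construction, $B(A,\mathcal S)\simeq \operatorname{hocolim}_{T\in\mathcal S} BA_T \simeq \operatorname{hocolim}_{T\in\mathcal S} N(W_T)\simeq N(W)$, then pass to universal covers --- is a genuinely different organization and is workable, but as written it has a real gap at the last step, and the middle step is under-specified.

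The gap: knowing that $\Phi(A,\mathcal S)\times EA \to \Phi(A,\mathcal S)\times_A EA$ is a regular covering with deck group $A$, and that $\pi_1(B(A,\mathcal S))\cong A$ abstractly (via $B(A,\mathcal S)\simeq N(W)$), does not make this cover the universal cover. For that you need the specific deck-group surjection $\pi_1(B(A,\mathcal S))\twoheadrightarrow A$ to be injective, equivalently that $\Phi(A,\mathcal S)$ is simply connected; an abstract isomorphism with the quotient does not give injectivity unless the group is Hopfian, which is not known for general Artin groups. The fix is the same computation as in Lemma \ref{lemma:simcon}: the quotient $\Phi(A,\mathcal S)/A$ is the derived complex of $\mathcal S$, a cone on $[\varnothing]$, so the fundamental group of the complex of groups is $\operatorname{colim}_{T\in\mathcal S} A_T$, and since $\mathcal S\supseteq\mathcal S^f$ contains every singleton and every edge $\{s,t\}$ of $\Gamma$, this colimit realizes the Artin presentation and the deck map is an isomorphism; only then is $\Phi(A,\mathcal S)\times EA$ the universal cover. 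Separately, in the passage from $\mathcal S^f$ to $\mathcal S$ the relevant input is not that the added pieces are aspherical but that for every $T\in\mathcal S$ one has the Salvetti decomposition $N(W_T)\simeq \operatorname{hocolim}_{T'\subseteq T,\ T'\in\mathcal S^f} N(W_{T'})$; with that, a homotopy left Kan extension (cofinality) argument gives $\operatorname{hocolim}_{\mathcal S}\simeq\operatorname{hocolim}_{\mathcal S^f}\simeq N(W)$. Asphericity of the $N(W_T)$ is exactly what you already spent in identifying $BA_T\simeq N(W_T)$, and it plays no role in this comparison, so the ``nerve-theorem style argument'' as you phrase it is pointing at the wrong hypothesis even though the needed statement is true.
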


Our family $\mathcal S^\ell$ is not itself complete and $K(\pi,1)$, so we cannot directly apply this result. Instead, we show that $\hat \Phi$ is homotopic to $\Phi \coloneqq \Phi(A, \overline {\mathcal S})$ for a certain complete and $K(\pi,1)$ collection $\overline {\mathcal S}$ which we define as follows:
the sets of $\overline {\mathcal S}$ are the subsets of $S$ consisting of (1) the sets in $\mathcal S^\ell$, and (2) every subset of $S_i$. 

\begin{lemma} \label{lemma:barScomplete}
    $\overline{\mathcal S}$ is a complete and $K(\pi,1)$ family of subsets of $S$.
\end{lemma}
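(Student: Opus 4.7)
The plan is to verify the three defining conditions of a complete and $K(\pi,1)$ family one at a time, with the bulk of the work going into condition~(3).

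For condition~(1), closure under subsets, I would do a case split on $T \in \overline{\mathcal S}$. Any subset of some $S_i$ remains a subset of $S_i$, hence lies in $\overline{\mathcal S}$ by construction. Otherwise $T \in \mathcal S^\ell$ is one of $\varnothing$, a singleton, an edge $\{s_i,s_j\}$ between distinct $\Gamma_i,\Gamma_j$, or an $S_i$, and the (small) subset lattice of each such $T$ is readily seen to lie inside $\mathcal S^\ell$ (noting that the vertices of an edge between subgraphs give rise to singletons of type~(4)) or inside the subsets-of-$S_i$ part of $\overline{\mathcal S}$.

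For condition~(2), most pieces are automatic: $A_\varnothing$ is trivial, each $A_{\{s\}} \cong \mathbb{Z}$, and each dihedral Artin group $A_{\{s_i,s_j\}}$ is spherical-type, so all of these satisfy the $K(\pi,1)$ conjecture by \cite{deligne1972immeubles}; and each $A_{S_i} = A_{\Gamma_i}$ satisfies $K(\pi,1)$ by the standing assumption of this section. For the remaining sets $T \subsetneq S_i$, I would invoke the (standard) fact that the $K(\pi,1)$ property passes to standard parabolic subgroups, reducing us to the hypothesis on $A_{\Gamma_i}$.

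The interesting part is condition~(3): if $A_T$ is spherical-type then $T \in \overline{\mathcal S}$. Being spherical-type forces $T$ to be a clique in $\Gamma$. If $T \subseteq S_i$ we are done, so assume $T$ meets at least two of the subgraphs, say $s \in S_i$ and $s' \in S_j$ with $i \neq j$. The goal is to show $|T|=2$, whence $T$ is an edge between distinct $\Gamma_i$ and lies in $\mathcal S^\ell$. Suppose for contradiction a third vertex $s'' \in T$ exists. In each of the three possible placements of $s''$ (either in $S_i$, in $S_j$, or in a third subgraph $S_k$), at least two of the three edges of the clique $\{s,s',s''\}$ lie between distinct subgraphs and share a common vertex; by (REL$'$) these two edges then carry labels $\geq 4$. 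Hence the rank-$3$ parabolic $A_{\{s,s',s''\}}$ of the spherical-type group $A_T$ would itself be spherical-type with two edge labels $\geq 4$, contradicting the classification of rank-$3$ spherical-type Artin groups as $A_3$, $B_3$, $H_3$ with label multisets $\{2,3,3\}$, $\{2,3,4\}$, $\{2,3,5\}$, none of which has two labels at least $4$. The main obstacle is this last step, where one must carefully extract the right three-vertex configuration and combine (REL$'$) with the rank-$3$ spherical classification to derive the contradiction.
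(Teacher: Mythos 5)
Your proposal is correct and takes essentially the same route as the paper: conditions (1) and (2) are dispatched by the construction of $\overline{\mathcal S}$ together with the fact that the $K(\pi,1)$ property passes to standard parabolic subgroups, and condition (3) is proved by showing that a third vertex would produce two distinct, non-disjoint edges between the family $\{\Gamma_i\}$, forcing both labels to be at least $4$ by (REL$'$) and contradicting the classification of rank-$3$ spherical types. (One tiny caveat: your list of rank-$3$ spherical types omits the reducible ones, $A_1^3$ and $A_1\times I_2(m)$, but these also have at most one label $\geq 4$, so the contradiction stands.)
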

\begin{proof}
    First we note that (1) and (2) are satisfied immediately by our definition of $\overline{\mathcal S}$ (to see (2), note that a standard parabolic subgroup satisfies the $K(\pi,1)$ conjecture whenever the original group does by \cite[Cor. 2.4]{godelle2012k}). It remains to show that $\overline{\mathcal S}$ contains all spherical-type generating sets. 
    
    Suppose $\Gamma'$ is a full subgraph of $\Gamma$ such that $A_{\Gamma'}$ is spherical-type, and let $T = \mathrm{Vert}(\Gamma')$. If $T \subseteq S_i$, then we already have $T \in  \overline{\mathcal S}$. So, suppose there are $t_1,t_2\in T$ with $t_1$ and $t_2$ in distinct vertex sets, say $t_1 \in S_i$ and $t_2 \in S_j$ for $i \not= j$.
    
    If $T = \{t_1,t_2\}$ then since we're assuming $A_{\Gamma'}$ is spherical-type, we must have that $\{t_1,t_2\}$ is an edge of $\Gamma$, and thus $T \in \overline{\mathcal S}$. In other words, whenever $|T| = 2$ and $T \not\subset S_k$ for any $k$, we must have that $T$ is an edge of $\Gamma$, so $T \in \overline{\mathcal S}$.
    
    Suppose $|T| > 2$, and let $t_3 \in T$ be distinct from $t_1$ and $t_2$. If any of $\{t_1,t_2\}$, $\{t_2,t_3\}$, or $\{t_3,t_1\}$ is not an edge of $\Gamma$, then $A_{\Gamma'}$ would not be spherical, so we must have that each of these are edges. 
    There are three cases to consider: either $t_3$ is in $S_1$, is in $S_2$, or is in neither. By symmetry we may consider only the cases where $t_3 \in S_1$ and $t_3$ is in neither. In both of these cases,
    $\{t_1,t_2\}$ and $\{t_3,t_2\}$ are distinct non-disjoint edges between the family $\{\Gamma_i\}$, so by the (REL$'$) condition, both of their labels must be at least 4. By the classification of finite Coxeter groups, we would then have that $A_{\Gamma'}$ is not spherical-type. Thus if $T \not\subset S_i$ we cannot have $|T| > 2$. 
\end{proof}

Thus, we have that $\Phi$ is homotopy equivalent to the universal cover of $N(W)$. It remains to show that

\begin{thm} \label{thm:phihatdeform}
    There is a deformation retract from $\Phi$ to $\hat \Phi$.
\end{thm}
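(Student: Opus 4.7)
The plan is to construct an explicit closure operator $\rho$ on the poset $A\overline{\mathcal{S}}$ whose fixed set is $A\mathcal{S}^\ell$, and then invoke the standard fact (due essentially to Quillen) that closure operators on posets induce deformation retracts of their order complexes. Define $\rho$ to be the identity on $A\mathcal{S}^\ell$, and for a coset $\alpha A_T$ with $T \in \overline{\mathcal{S}} \setminus \mathcal{S}^\ell$ --- necessarily a nonempty proper subset of a uniquely determined $S_i$, since the $\{S_j\}$ are pairwise disjoint --- set $\rho(\alpha A_T) = \alpha A_{S_i}$. By construction $\rho^2 = \rho$, and $\alpha A_T \subseteq \rho(\alpha A_T)$ for every coset (via Lemma \ref{lemma:cosetinclusion}), so $\rho$ will be a closure operator once it is shown to be order-preserving.

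The key technical step is this order-preservation check. Suppose $\alpha A_T \subseteq \alpha' A_{T'}$; by Lemma \ref{lemma:cosetinclusion}, $T \subseteq T'$ and $\alpha A_{T'} = \alpha' A_{T'}$. When $T$ and $T'$ both lie in $\mathcal{S}^\ell$, or both lie in $\overline{\mathcal{S}} \setminus \mathcal{S}^\ell$, the verification is straightforward (in the latter case, $T,T'\subseteq S_i$ for the same $i$ by disjointness, giving $\alpha A_{S_i} = \alpha' A_{S_i}$). The essential case is $T \notin \mathcal{S}^\ell$ (so $T \subsetneq S_i$) but $T' \in \mathcal{S}^\ell$: inspecting the classification defining $\mathcal{S}^\ell$, the only element of $\mathcal{S}^\ell$ that can contain such a $T$ is $T' = S_i$, since the remaining options ($T' = \varnothing$, an edge between two distinct $\Gamma_k$, or a singleton on such an edge) would all either contradict $T \subsetneq S_i$ or force $T \in \mathcal{S}^\ell$. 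Hence $\rho(\alpha' A_{T'}) = \alpha' A_{S_i} = \alpha A_{S_i} = \rho(\alpha A_T)$, and the dual mixed case is immediate from $T' \subseteq S_j$ via $\alpha A_T \subseteq \alpha' A_{T'} \subseteq \alpha' A_{S_j}$.

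With $\rho$ established as a closure operator, the deformation retract follows by the standard straight-line argument. For any chain $\alpha_0 A_{T_0} < \dots < \alpha_n A_{T_n}$ in $A\overline{\mathcal{S}}$, the enlarged set $\{\alpha_j A_{T_j}\} \cup \{\rho(\alpha_j A_{T_j})\}$ is again totally ordered --- comparability of $\rho(\alpha_j A_{T_j})$ with $\alpha_i A_{T_i}$ for $i > j$ reduces to the cases analyzed above --- and therefore spans a simplex of $\Phi$ containing both the original simplex and its image. The linear homotopy from $\mathrm{id}$ to $|\rho|$ thus stays inside $\Phi$, exhibiting $\hat \Phi$ as a deformation retract of $\Phi$. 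The most delicate piece of the argument is the order-preservation verification in the mixed case, which hinges on the rigidity of $\mathcal{S}^\ell$: its members attached to edges between distinct $\Gamma_k$'s have at most two vertices and those vertices lie in distinct $S_j$'s, leaving $S_i$ as the only admissible superset once $T \subsetneq S_i$.
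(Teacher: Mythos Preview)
Your argument is correct. The retraction you build is the very same one the paper describes: in both cases every coset $\alpha A_T$ with $\varnothing \neq T \subsetneq S_i$ and $T \notin \mathcal{S}^\ell$ is pushed up to $\alpha A_{S_i}$, so on a maximal simplex $[\alpha_0 1,\alpha_1 A_{\{s\}},\dots,\alpha_n A_{S_i}]$ your $|\rho|$ lands exactly on the face $[\alpha_0 1,\alpha_1 A_{\{s\}},\alpha_n A_{S_i}]$ (or $[\alpha_0 1,\alpha_n A_{S_i}]$ when $\{s\}\notin\mathcal{S}^\ell$), matching the paper's description verbatim.

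The difference is purely in packaging. The paper works simplex-by-simplex and then waves at the compatibility of the individual retractions (``these can easily be parameterized so that we can glue\dots''). You instead recognise the map as a closure operator on the poset $A\overline{\mathcal{S}}$ and invoke the standard fact that such operators induce deformation retracts of order complexes; the enlarged-chain observation is precisely what makes the straight-line homotopy globally well defined and replaces the paper's gluing step with a one-line argument. Your formulation is cleaner and more robust, though it is the same underlying retraction rather than a genuinely different route.
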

\begin{proof}
    Note that there is a natural embedding of $\hat \Phi$ into $\Phi$ induced by the inclusion of $\mathcal S^\ell$ in $\overline{\mathcal S}$.
    
    We establish the deformation retract directly by describing the maps on each simplex.
    Let $\Delta$ be a maximal simplex of $\Phi$ (ie, one which is not a face of any other simplex). There are two types of simplices to consider. The first is 
    \[
        \Delta = [\alpha_0 1, \alpha_0 A_{\{s\}}, \alpha_1 A_{\{s,t\}}]
    \]
    for an edge $\{s,t\}$ between the family $\{\Gamma_i\}$. This is already a maximal simplex of $\hat \Phi$, so we leave it unchanged.
    In the other case, we have that \[\Delta = [\alpha_0 A_{T_0}, \alpha_1 A_{T_1}, \dots, \alpha_{n-1}A_{T_{n-1}}, \alpha_n A_{S_i}]\] for some $S_i$. Since $\Delta$ is maximal, we must have $T_0 = \varnothing$ and $T_1 = \{s\}$ for some $s \in S_i$. There are two subcases to consider.
    If $s$ is a vertex of an edge between $\Gamma_i$ and some $\Gamma_j$, then there is a natural deformation retract from $\Delta$ to the simplex $[\alpha_0 A_{T_0}, \alpha_1 A_{T_1}, \alpha_n A_{S_i}]$ of $\hat\Phi$. Otherwise, there is a natural deformation retract from $\Delta$ to the simplex $[\alpha_0 A_{T_0}, \alpha_n A_{S_i}]$ of $\hat \Phi$. Moreover, these can easily be parameterized so that we can glue deformation retracts of adjacent maximal simplices to attain a deformation retract on the entire complex $\Phi$.
\end{proof}

We have therefore proven

\relkpone*

\begin{proof}
    First, suppose $A_\Gamma$ satisfies the $K(\pi,1)$ conjecture. Then, by \cite[Cor. 2.4]{godelle2012k}, each $A_{\Gamma_i}$ also does.

    Now suppose each $A_{\Gamma_i}$ satisfies the $K(\pi,1)$ conjecture. Combining Theorem \ref{thm:complhomtype} and Lemma \ref{lemma:barScomplete}, we have that $\Phi$ is homotopy equivalent to the universal cover of $N(W)$, and by Theorem \ref{thm:phihatdeform}, $\hat\Phi$ is homotopy equivalent to $\Phi$. Thus, by Theorem \ref{prop:hatphicat0}, the universal cover of $N(W)$ is contractible.
\end{proof}

\section{Acylindrical hyperbolicity}

We conclude by showing the following

\acynhyp*

For the full definition of acylindrical hyperbolicity, we refer the reader to \cite{Bowditch2008}.

First, if there are no edges between the family $\{\Gamma_i\}$, then $A_\Gamma$ is a free product of the $A_{\Gamma_i}$, and thus is acylindrically hyperbolic by considering the action of $A_\Gamma$ on its Bass-Serre tree. So, we assume there is an edge between the family $\{\Gamma_i\}$, say $e = \{s_i,s_j\}$ with $s_i \in S_i$, $s_j \in S_j$ and $i \not= j$. By our assumptions on $\Gamma$, we may take $e$ to have label at least 3.
In this case, we make use of the following adaptation of a theorem from \cite{martin2017}. 

\begin{thm} \label{thm:hypcrit}
    \textbf{\emph{\cite[Theorem B]{martin2017}}} \:
    Let $X$ be a $\mathrm{CAT}(0)$ simplicial complex and $G$ a group acting on $X$ by simplicial isomorphisms. Suppose there is a vertex $v$ of $X$ with stabilizer $G_v$ satisfying
    \begin{enumerate}
        \item The orbits of $G_v$ on the link $\mathrm{lk}_X(v)$ are unbounded in the associated spherical metric, and
        \item $G_v$ is weakly malnormal in $G$ (ie, there exists an element $g \in G$ such that $G_v \cap gG_vg^{-1}$ is finite).
    \end{enumerate}
    Then G is either virtually cyclic or acylindrically hyperbolic.
\end{thm}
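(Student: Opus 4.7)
The plan is to exhibit a loxodromic element $\gamma \in G$ which is WPD (weakly properly discontinuous) for some hyperbolic action of $G$, and then invoke Osin's characterization: a group admits a loxodromic WPD action on a hyperbolic space if and only if it is either virtually cyclic or acylindrically hyperbolic. The element $\gamma$ will be built by combining a ``rotation'' at $v$ supplied by hypothesis (1) with a ``translation'' supplied by hypothesis (2).

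From (1), I would select $h \in G_v$ whose $\langle h \rangle$-orbit on $\mathrm{lk}_X(v)$ has diameter exceeding $\pi$ in the spherical metric; such an $h$ exists because the orbits are unbounded in a $\mathrm{CAT}(1)$ space. From (2), fix $g \in G$ with $G_v \cap gG_vg^{-1}$ finite, and set $\gamma = g h^N$ for $N$ large enough that the ``incoming'' direction of the geodesic $[gv, v]$ at $v$ and the ``outgoing'' direction obtained by applying $h^N$ are at spherical distance greater than $\pi$ in $\mathrm{lk}_X(v)$. The $\mathrm{CAT}(0)$ angle criterion then forces successive concatenations of translated segments to be locally geodesic, so the orbit $\{\gamma^n v\}_{n \in \mathbb{Z}}$ lies on a bi-infinite (quasi-)axis in $X$ and $\gamma$ acts loxodromically on $X$. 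To obtain a hyperbolic target space, I would cone off the $G$-orbit of $v$ by attaching a cone point $c_{gv}$ of small radius over each $gv \in G \cdot v$, producing a space $\hat X$ on which $G$ still acts; hyperbolicity of $\hat X$ follows because any would-be flat region in $X$ must pass through many type-$v$ vertices, and these are coarsely collapsed by the cone points, destroying the flat.

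The main obstacle is proving that $\gamma$ is WPD for the action on $\hat X$, that is, that only finitely many elements of $G$ simultaneously approximately fix two sufficiently distant axial points $\gamma^{-n}v$ and $\gamma^n v$. The heart of the argument is that any such $f$ must, after adjusting by elements of the point stabilizers, lie within bounded error of an intersection $G_v \cap \gamma^{2n} G_v \gamma^{-2n}$. Using that $\gamma$ contains the translational factor $g$, one rewrites this intersection in the form $G_v \cap g' G_v (g')^{-1}$ with $g' \notin G_v$, whence hypothesis (2) forces the intersection to be finite. This is the critical place where weak malnormality is used, and the step where one must be most careful about combining the axial displacement estimates with the stabilizer algebra. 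Once WPD is verified, Osin's theorem yields the dichotomy: $G$ is virtually cyclic if $\gamma$ generates a finite-index subgroup, and acylindrically hyperbolic otherwise, completing the proof.
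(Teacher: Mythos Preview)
The paper does not prove this theorem; it is quoted directly from \cite{martin2017} (Theorem~B there), with the remark that the $\mathrm{CAT}(0)$ simplicial hypothesis is a special case of Martin's ``Strong Concatenation Property''. So there is no proof in the paper to compare against, and your sketch should be measured against Martin's original argument rather than anything here.

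That said, your outline has the right shape---build a loxodromic from a link rotation $h \in G_v$ and the conjugator $g$ from (2), then verify WPD---but there are two genuine gaps.

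First, the coned-off space $\hat X$ is not shown to be hyperbolic, and in general it will not be: coning off a discrete orbit of vertices in a $\mathrm{CAT}(0)$ complex does nothing to destroy flats that avoid that orbit, so your one-line justification (``flats must pass through many type-$v$ vertices'') is exactly the hard geometric fact you would need to prove, and it is false without further hypotheses. Martin's route avoids this entirely: the angle-$>\pi$ condition at each $\gamma^k v$ makes the piecewise geodesic through the orbit a genuine $\mathrm{CAT}(0)$ geodesic, and one then shows this axis is \emph{strongly contracting} (rank one). A contracting WPD element yields acylindrical hyperbolicity via Bestvina--Bromberg--Fujiwara/Sisto, with no ad hoc cone-off needed.

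Second, your WPD step misapplies weak malnormality. Hypothesis (2) gives you a \emph{single} element $g$ with $G_v \cap gG_vg^{-1}$ finite; it does not say this holds for every $g' \notin G_v$, so ``rewrite as $G_v \cap g'G_v(g')^{-1}$ with $g' \notin G_v$, whence finite'' is a non sequitur. The correct reduction uses that the axis passes through the intermediate vertex $\gamma v = g h^N v = g v$: any $f$ coarsely fixing $v$ and $\gamma^n v$ must coarsely fix the entire geodesic, hence $gv$, and therefore lies (up to bounded ambiguity) in $G_v \cap G_{gv} = G_v \cap gG_vg^{-1}$, which is finite by (2) for that specific $g$.
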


\begin{rem}
    This is a strictly weaker statement than the one given in \cite{martin2017}. The original statement of the theorem allows $X$ to be a polyhedral complex satisfying the ``Strong Concatenation Property''. By Example 2.9 and Lemma 2.11 in \cite{martin2017}, $\mathrm{CAT}(0)$ simplicial complexes always satisfy this property.
\end{rem}

We use the action of $A_\Gamma$ on our Deligne-like complex $\hat\Phi$, which we have shown is $\mathrm{CAT}(0)$ for any Artin group satisfying (REL'). We claim that $v_e \coloneqq [A_{\{s_i,s_j\}}]$ is a vertex of $\hat\Phi$ which satisfies the conditions of Theorem \ref{thm:hypcrit}.

Since we have assumed $|\mathrm{Vert}(\Gamma)| \geq 3$, there is at least one vertex $s$ of $\Gamma$ distinct from $s_i$ and $s_j$. If there are no such $s$ so that either $\{s,s_i\}$ or $\{s,s_j\}$ are edges of $\Gamma$, then $A_\Gamma$ is a free product and thus acylindrically hyperbolic by our previous remarks. In the other case, take $s$ so that one of $\{s,s_i\}$ or $\{s,s_j\}$ is an edge of $\Gamma$, and define $\Delta = \{s,s_i,s_j\}$. Then the full subgraph of $\Gamma$ on vertices $\Delta$ is connected, and,
by the (REL$'$) condition, $A_\Delta$ is a 2-dimensional Artin group. Moreover, we have assumed $m(s_i,s_j) > 2$, so $A_\Delta$ is not a right-angled Artin group. Thus, we may use the following

\begin{thm}
    \textbf{\emph{\cite[Lemma 5.7]{vaskou2021acylindrical}}} \:
    Let $A_{\Lambda}$ be a $2$-dimensional Artin group of rank at least $3$, and suppose that $\Lambda$ is connected and that $A_{\Lambda}$ is not a right angled Artin group. Then there exists an Artin subgroup $A_{\{a,b\}}$ with coefficient $3 \leq m(a,b) < \infty$ and an element $g \in A_{\Lambda}$ such that $A_{\{a,b\}} \cap g A_{\{a,b\}} g^{-1} = \{1\}$.
\end{thm}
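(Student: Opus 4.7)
The plan is to produce the conjugator explicitly using a third generator supplied by connectedness and rank, and then verify triviality of the intersection via the action of $A_\Lambda$ on its modified Deligne complex $\Phi_M$, which is $\mathrm{CAT}(0)$ for every $2$-dimensional Artin group by Charney--Davis \cite{charney1995k}. Since $A_\Lambda$ is not right-angled, there is an edge $\{a,b\}$ of $\Lambda$ with $3 \leq m(a,b) < \infty$; this already supplies the dihedral subgroup. Using that $\Lambda$ has at least three vertices and is connected, choose $c \in \mathrm{Vert}(\Lambda) \setminus \{a,b\}$ which is joined by an edge of $\Lambda$ to $a$ or $b$ (say to $a$). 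The proposed element is $g = c$ (or, if needed, a short word in $c$ and the half-twist of $\{a,b\}$; see below).

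Next I would reinterpret the intersection geometrically. The vertex $v := [A_{\{a,b\}}]$ of $\Phi_M$ has stabilizer exactly $A_{\{a,b\}}$, and $c \cdot v = [c A_{\{a,b\}}]$ has stabilizer $c A_{\{a,b\}} c^{-1}$. By the analog of Lemma \ref{lemma:cosetinclusion} for $\mathcal{S}^f$, the equality $c A_{\{a,b\}} = A_{\{a,b\}}$ would force $c \in A_{\{a,b\}}$, which van der Lek rules out since $\{c\} \not\subseteq \{a,b\}$. So $v$ and $c \cdot v$ are distinct, and any $h \in A_{\{a,b\}} \cap c A_{\{a,b\}} c^{-1}$ fixes both, hence pointwise fixes the unique $\mathrm{CAT}(0)$ geodesic $\gamma$ joining them in $\Phi_M$.

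The heart of the proof is to show the pointwise stabilizer of $\gamma$ is trivial. The stabilizers of cells of $\Phi_M$ incident to $v$ are parabolics of the form $\alpha A_T \alpha^{-1}$ with $T \subseteq \{a,b\}$ (since $A_\Lambda$ is $2$-dimensional, there are no spherical triples containing $\{a,b\}$ unless $\{a,b,c\}$ itself is spherical), and similarly at $c\cdot v$ the incident-cell stabilizers are conjugate parabolics on subsets of $\{a,b\}$. I would argue that $\gamma$ must exit $v$ through an edge whose stabilizer is either trivial or $A_{\{a\}}$ or $A_{\{b\}}$, then invoke van der Lek's intersection theorem to intersect such a parabolic with the corresponding conjugate parabolic at $c \cdot v$: the only standard parabolic contained in both $A_{\{a,b\}}$ and $cA_{\{a,b\}}c^{-1}$ after translating back to the standard copy should force the generating set to be empty, yielding $h = 1$.

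The main obstacle will be the triangle case where $c$ is adjacent in $\Lambda$ to both $a$ and $b$. Here $2$-dimensionality imposes $\tfrac{1}{m(a,b)} + \tfrac{1}{m(b,c)} + \tfrac{1}{m(a,c)} \leq 1$, which prevents $\{a,b,c\}$ from being spherical and therefore keeps $A_{\{a,b,c\}}$ out of $\Phi_M$'s vertex set, but it also allows $\gamma$ to pass through several vertices of type $\{a,c\}$ or $\{b,c\}$ whose stabilizers are nontrivial dihedral subgroups. In that subcase the plain choice $g = c$ may fail, and I would replace it by $g = c \cdot \delta$ where $\delta$ is a suitable power of the standard generator or of the Garside element of $A_{\{a,b\}}$, chosen so that the translated vertex lies in a different ``branch'' of $\Phi_M$ from $v$ and the combinatorial type of $\gamma$ forces it to cross an edge with trivial stabilizer. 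Establishing that such a $\delta$ exists --- essentially, a minimal-displacement calculation for $A_{\{a,b\}}$ on a transversal in $\Phi_M$ --- is the delicate point and where I expect the bulk of the work to lie.
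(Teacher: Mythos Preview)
The paper does not prove this statement at all: it is quoted as \cite[Lemma 5.7]{vaskou2021acylindrical} and used as a black box in Section~4, so there is no in-paper argument to compare your proposal against.

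As a standalone attempt, your outline is on the right track (Vaskou's own proof also works in the modified Deligne complex and exploits the $\mathrm{CAT}(0)$ geometry to control the intersection of stabilizers), but it has a genuine gap precisely where you already suspect one. In the triangle case you do not actually produce the element $g$; you only assert that some $g = c\cdot\delta$ ``chosen so that the translated vertex lies in a different branch'' should work, and defer the existence of such a $\delta$ to an unspecified minimal-displacement calculation. That is the entire content of the lemma in this case, and it cannot be waved away: for arbitrary $\delta$ the geodesic from $v$ to $c\delta\cdot v$ can run through cells whose stabilizers are nontrivial cyclic or dihedral conjugates, and one must exhibit a specific $g$ and then verify, using the explicit angular metric on the links, that the geodesic is forced through a cell with trivial stabilizer.

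Even outside the triangle case your sketch is loose. The sentence ``the only standard parabolic contained in both $A_{\{a,b\}}$ and $cA_{\{a,b\}}c^{-1}$ after translating back to the standard copy should force the generating set to be empty'' does not go through as stated: $cA_{\{a,b\}}c^{-1}$ is not a standard parabolic, so van der Lek's $A_T\cap A_{T'}=A_{T\cap T'}$ does not apply directly. What you actually need is to follow the $\mathrm{CAT}(0)$ geodesic (which may cut through interiors of $2$-simplices, not just along edges) and intersect the stabilizers of the open cells it meets; already in the simplest situation ($c$ adjacent to $a$ only) this reduces not to van der Lek but to the fact that $\langle a\rangle \cap c\langle a\rangle c^{-1}=1$ inside the dihedral $A_{\{a,c\}}$ when $m(a,c)\ge 3$, which is a separate (true) statement you would have to justify, and which fails if $m(a,c)=2$ --- so the case split must be handled explicitly.
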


Applying this to $A_\Delta$, we have $a,b \in \Delta$ and $g \in A_\Delta$ so that $A_{\{a,b\}} \cap g A_{\{a,b\}} g^{-1} = \{1\}$. The proof of the Theorem implies that we may take $\{a,b\} = \{s_i,s_j\}$. This shows that $v_e$ satisfies (2). To show $v_e$ satisfies (1), we use

\begin{thm}
    \textbf{\emph{\cite[Lemma 4.5]{vaskou2021acylindrical}}} \:
    Consider an Artin group $A_{\{a,b\}}$ with coefficient $3 \leq m(a,b) \leq \infty$. Then
    \[\{\, \ell_{\mathcal{S}}(g) \,:\, g \in A_{\{a,b\}} \,\} \] 
    is unbounded (where $\ell_{\mathcal S}(g)$ is the syllable length of $g$).
\end{thm}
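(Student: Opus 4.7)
The plan is to produce, for each $N$, an element $g \in A_{\{a,b\}}$ with $\ell_{\mathcal{S}}(g) \geq N$. I would handle the two cases of $m \coloneqq m(a,b)$ separately.

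If $m = \infty$ then $A_{\{a,b\}}$ is the free group $F(a,b)$, with no relations at all, and the syllable length of an element is just the number of maximal-exponent syllables of its unique freely reduced form. Taking $g_n = (ab)^n$ gives $\ell_{\mathcal{S}}(g_n) = 2n$ at once, so this case is immediate.

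The real content is the case $3 \leq m < \infty$. The subtlety here is that the braid relation $\mathrm{prod}(a,b;m) = \mathrm{prod}(b,a;m)$ can rewrite a word and, after merging adjacent equal generators, decrease syllable count. For example, already in $B_3$ one has $abab = bab\cdot b = bab^2$, dropping from four syllables to three. My plan is to invoke the Appel--Schupp normal-form theorem for $2$-generator Artin groups, which guarantees that any two reduced expressions of the same element have the same syllable length, so that $\ell_{\mathcal{S}}$ is a genuine combinatorial invariant of a group element. Granting this invariance, I would take $g_n = \Delta^n$, where $\Delta = \mathrm{prod}(a,b;m)$ is the Garside element, and argue that the reduced form of $g_n$ has syllable length growing linearly in $n$.

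The main obstacle is the lower bound $\ell_{\mathcal{S}}(\Delta^n) \geq cn$. Invariance of syllable length under rewriting is not enough on its own; one has to certify that the reduced form of the chosen family actually grows. My approach would be to exploit that each application of a braid relation is local---it involves only $m$ consecutive alternating syllables---and can therefore reduce syllable count by at most a bounded amount per application. Meanwhile, the Garside decomposition of $\Delta^n$ contains of order $n$ essentially disjoint ``copies'' of $\Delta$, so any sequence of braid moves can collapse only bounded portions near each copy boundary. A careful combinatorial bookkeeping of these local moves then yields $\ell_{\mathcal{S}}(\Delta^n) \geq cn$ for an explicit constant $c > 0$ depending on $m$, completing the unboundedness. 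I expect the bookkeeping step, rather than the invocation of Appel--Schupp, to be the technical heart of the argument, since it requires tracking how iterated local rewritings can interact globally.
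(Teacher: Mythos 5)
This statement is not proved in the paper at all; it is quoted verbatim from Vaskou \cite[Lemma 4.5]{vaskou2021acylindrical} and used as a black box, so your argument has to stand entirely on its own. The $m=\infty$ case is fine, but in the finite case there is a genuine gap, in two places. First, the ``Appel--Schupp normal-form theorem'' you invoke does not exist in the form you state. Your own example shows why: $abab$ and $bab^2$ are both freely reduced (indeed positive) words representing the same element of $B_3$, with $4$ and $3$ syllables respectively, so syllable count is \emph{not} constant on freely reduced representatives; for your invariance claim to be true, ``reduced'' would have to mean ``of minimal syllable length,'' in which case the invariance is a tautology and gives no computational handle. What Appel and Schupp actually prove (their Lemma 6, the statement used elsewhere in this paper) is a Greendlinger-type fact: a nontrivial reduced word representing the identity has at least $2m$ syllables. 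That is a statement about relations, not a normal form, and it does not by itself lower-bound $\ell_{\mathcal S}(\Delta^n)$: if $\Delta^n$ had some expression with very few syllables, concatenating it with the obvious $mn$-syllable expression would give a word representing $1$ with \emph{many} syllables, which Lemma 6 does not forbid.

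Second, the bookkeeping step you defer is exactly the content of the lemma, and the heuristic you give for it is unsound as stated. Bounding the syllable loss per application of the braid relation proves nothing, since a derivation between two expressions of the same element may use arbitrarily many applications; and those applications need not stay localized near the ``copy boundaries'' of the Garside decomposition, because $a\Delta=\Delta\sigma(a)$ (with $\sigma$ trivial for $m$ even and swapping $a,b$ for $m$ odd), so syllables migrate freely across copies of $\Delta$; moreover arbitrary expressions of $\Delta^n$ involve negative exponents, so one cannot even reduce to braid moves on positive alternating subwords without already invoking Garside or normal-form theory. To certify $\ell_{\mathcal S}(\Delta^n)\geq cn$ you need an actual invariant: for instance geodesic syllable-normal forms for dihedral Artin groups, or Appel--Schupp's small-cancellation machinery applied to show a specific family of words is syllable-geodesic, or a geometric argument via an action in which $\langle a\rangle$ and $\langle b\rangle$ are elliptic while the group has unbounded orbits. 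As written, your proposal correctly identifies the difficulty but does not resolve it, so it is a plan rather than a proof.
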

By the same analysis in the case of loops, if $\{a,b\}$ is an edge between the family $\{\Gamma_i\}$, then reduced words in $a$ and $b$ correspond to paths in $\mathrm{lk}_{\hat\Phi}([A_{\{a,b\}}])$, and vice versa. The edge length of such a path is at least the syllable length of the given word. So, since $m(s_i,s_j) \geq 3$, we have that the action of $A_{\{s_i,s_j\}}$ on $\mathrm{lk}_{\hat\Phi}([A_{\{s_i,s_j\}}]) = \mathrm{lk}_{\hat\Phi}(v_e)$ is unbounded. Therefore, $v_e$ also satisfies (1), and thus $A_\Gamma$ is acylindrically hyperbolic.

\bibliographystyle{alpha}
\bibliography{relativelyextralarge}

\end{document}